\newtheorem{theorem}{Theorem}
\newtheorem{lemma}[theorem]{Lemma}
\newtheorem{conjecture}{Conjecture}
\theoremstyle{definition}
\newtheorem{example}[theorem]{Example}
\newtheorem*{question}{Question}
\def \mod#1{{\:({\rm mod}\ #1)}}
\def \N{\mathbb{N}}
\def \leq {\leqslant}
\def \geq {\geqslant}
\let\oldproofname=\proofname
\renewcommand{\proofname}{\textup{\textbf{\oldproofname}}}
\title{On determining when small embeddings of partial Steiner triple systems exist}
\author{Darryn Bryant \thanks{School of Mathematics and Physics, The University of Queensland, QLD 4072, Australia} \qquad Ajani De Vas Gunasekara \thanks{
School of Mathematical Sciences, Monash University, Victoria 3800, Australia} \qquad  Daniel Horsley \thanksmark{2} }
\date{}
\begin{document}

\maketitle

\begin{abstract}
A \emph{partial Steiner triple system} of order $u$ is a pair $(U,\mathcal{A})$ where $U$ is a set of $u$ elements and $\mathcal{A}$ is a set of triples of elements of $U$ such that any two elements of $U$ occur together in at most one triple. If each pair of elements occur together in exactly one triple it is a \emph{Steiner triple system}. An \emph{embedding} of a partial Steiner triple system $(U,\mathcal{A})$ is a (complete) Steiner triple system $(V,\mathcal{B})$ such that $U \subseteq V$ and $\mathcal{A} \subseteq \mathcal{B}$. For a given partial Steiner triple system of order $u$ it is known that an embedding of order $v \geq 2u+1$ exists whenever $v$ satisfies the obvious necessary conditions. Determining whether ``small'' embeddings of order $v < 2u+1$ exist is a more difficult task. Here we extend a result of Colbourn on the $\mathsf{NP}$-completeness of these problems. We also exhibit a family of counterexamples to a conjecture concerning when small embeddings exist.
\end{abstract}

\section{Introduction}

A \textit{partial Steiner triple system of order $u$}, or PSTS$(u)$, is a pair $(U,\mathcal{A})$ where $U$ is a set of $u$ elements and $\mathcal{A}$ is a set of triples of elements of $U$ with the property that any two elements of $U$ occur together in at most one triple. If any two elements of $U$ occur together in exactly one triple then $(U,\mathcal{A})$ is a \textit{Steiner triple system of order $u$}, or STS$(u)$. It is well known that a Steiner triple system of order $u$ exists if and only if $u \equiv 1,3 \mod{6}$ \cite{ColRos}. This was first proved by Kirkman in \cite{kirkman 1847}. We call integers congruent to 1 or 3 modulo 6 \emph{admissible} and denote the set of positive admissible integers by $\N^\dag$.

A \textit{$K_3$-decomposition} of a graph $G$ is a set of triangles in $G$ such that each edge of $G$ is in exactly one triangle in the set. A Steiner triple system of order $v$ is equivalent to a $K_3$-decomposition of $K_v$ and a partial Steiner triple system of order $u$ is equivalent to a $K_3$-decomposition of some subgraph of $K_u$. The \textit{leave} of a partial Steiner triple system $(U,\mathcal{A})$  is the graph $L$ having vertex set $U$ and the edge set $E(L) = \{ xy : \hbox{$\{x,y,z\} \notin \mathcal{A}$ for all $z \in U$}\}$.  For a partial Steiner triple system $(U,\mathcal{A})$, we say that a (complete) Steiner triple system $(V,\mathcal{B})$ is an \textit{embedding} of $(U,\mathcal{A})$ if $U \subseteq V$ and $\mathcal{A} \subseteq \mathcal{B}$. A (proper) \textit{$c$-edge colouring} of a graph $G$ is an assignment of colours, chosen from some set of $c$ colours, to the edges of $G$ in such a way that any two edges incident with the same vertex receive distinct colours. All edge colourings considered in this paper will be proper.

It is known that any PSTS$(u)$ has an embedding of order $v$ for each admissible integer $v \geq 2u +1$ \cite{Lindner's proof}. Moreover, the bound of $v \geq 2u +1$ cannot be improved in general due to the fact that for any $u \geq 9$ there exists a PSTS$(u)$ which cannot be embedded in an STS$(v)$ for any $v < 2u+1$ \cite{ColRos}. Of course, many partial Steiner triple systems do have embeddings of order less than $2u+1$. We call such embeddings \textit{small} embeddings.

This paper concerns the problem of determining whether a given partial Steiner triple system has a small embedding of a specified order. Various aspects of this problem have been addressed in many papers (see \cite{Bryant2002, Lindner's proof, colbourn paper, Hors2014, NenadovSudakovwanger2019} for example). In this paper we provide updates on two of these contributions, namely \cite{colbourn paper} and \cite{Bryant2002}.

In \cite{colbourn paper} Colbourn showed the problem of determining whether a given partial Steiner triple system has a small embedding is $\mathsf{NP}$-complete. In order to be more precise we make some definitions. Call a function $F: \N \rightarrow \mathcal{P}(\N)$ \emph{admissible} if $F(u) \subseteq \{x \in \N^\dag: x \geq u\}$ for each $u \in \N$. For each admissible function $F$ we define a decision problem as follows.
\begin{description}[itemsep=0mm,parsep=0mm,topsep=1mm]
    \item[\textmd{\textsc{$F$-embed}}]

    \item[\textit{\textmd{Instance:}}]
A partial Steiner triple system $(U,\mathcal{A})$.

    \item[\textit{\textmd{Question:}}]
Does $(U,\mathcal{A})$ have an embedding of order $v$ for some $v \in F(|U|)$?
\end{description}

More formally, Colbourn's result in \cite{colbourn paper} is that \textsc{$F^*$-embed} is $\mathsf{NP}$-complete, where $F^*$ is the admissible function defined by $F^*(u)=\{x \in \N^\dag: u \leq x < 2u+1\}$ for each $u \in \N$. Here we extend this result by proving the following theorem. For a subset $S$ of $\N$, we say that integers in $S$ \emph{occur polynomially often} if there is a polynomial $P(x)$ such that, for each $n \in \N$, we have $\{s \in S:n \leq s \leq P(n)\} \neq \emptyset$.

\begin{theorem}\label{T:FembedNPComplete}
Let $F$ be an admissible function. The decision problem \textup{\textsc{$F$-embed}} is $\mathsf{NP}$-complete if there exists a real number $\epsilon>0$ such that integers $u$ for which $F(u)\neq \emptyset$ and $\max(F(u))<(2-\epsilon)u$ occur polynomially often.
\end{theorem}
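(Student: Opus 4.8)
The plan is to show both that \textsc{$F$-embed} lies in $\mathsf{NP}$ and that it is $\mathsf{NP}$-hard, the latter by reduction from Colbourn's problem \textsc{$F^*$-embed} \cite{colbourn paper}. For membership in $\mathsf{NP}$ I would argue that every ``yes'' instance $(U,\A)$ of order $u$ admits a polynomial-size witness. If $F(u)$ contains an admissible integer $v\geq 2u+1$, then the answer is automatically ``yes'' by the embedding result of \cite{Lindner's proof}, and it suffices to exhibit such a $v$; otherwise every $v\in F(u)$ satisfies $v<2u+1$, so any embedding has order less than $2u+1$ and can itself be written down in size polynomial in $u$. Thus, provided membership in $F(u)$ can be tested efficiently (a mild standing assumption on $F$), verifying a proposed embedding $(V,\mathcal{B})$ against the definition of an STS, the containments $U\subseteq V$ and $\A\subseteq\mathcal{B}$, and the condition $|V|\in F(u)$, all take polynomial time.

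The substance is the hardness reduction. Write $S$ for the set of integers $u$ with $F(u)\neq\emptyset$ and $\max(F(u))<(2-\epsilon)u$; by hypothesis $S$ occurs polynomially often, so fix a polynomial $P$ witnessing this. Given an instance $(U,\A)$ of \textsc{$F^*$-embed} with $|U|=u$, the first step is to choose a target order. Setting the parameter to a suitable polynomial lower bound $B(u)$ (large enough to leave room for the construction below), I would locate, by searching the interval $[B(u),P(B(u))]$ and testing membership in $S$, an integer $u'\in S$ with $B(u)\leq u'\leq P(B(u))$. Crucially $u'$ is then polynomially bounded in $u$, which is exactly what makes the reduction run in polynomial time; this is the sole role of the polynomial-occurrence hypothesis. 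Fixing any $v^*\in F(u')$, we have $u'\leq v^*<(2-\epsilon)u'<2u'+1$, so every order we aim for is a genuinely small embedding order of an order-$u'$ system.

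The second step is to build, in polynomial time, a PSTS $(U',\A')$ of order exactly $u'$ whose small-embeddability encodes that of $(U,\A)$: concretely, $(U',\A')$ should have an embedding of some order in $F(u')$ if and only if $(U,\A)$ has an embedding of order less than $2u+1$. Because $u'\in S$, the relevant threshold must move from at least $2u'+1$ (in the ``no'' case, where $(U',\A')$ should have no embedding of any order in $F(u')\subseteq[u',2u'+1)$) down to at most $\max(F(u'))<(2-\epsilon)u'$ (in the ``yes'' case, where a small embedding of $(U,\A)$ is to be extended to one of $(U',\A')$ of order $v^*$). Since this swing in the minimum embedding order is of size $\Theta(\epsilon u')$, the instance cannot merely be appended as a bounded gadget; it must be incorporated at scale $u'$, for instance by combining suitably many copies of $(U,\A)$ with a rigid frame on the remaining points, arranged so that the whole system embeds small exactly when every copy — equivalently $(U,\A)$ itself — does. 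The natural tools for the analysis are the leave-graph reformulation of embeddings together with the known leave conditions (triple-free points inflating the forced number of new points) that pin the threshold.

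Designing this construction and proving the equivalence is where I expect the real difficulty to lie. The delicate feature is that $F(u')$ is an arbitrary nonempty subset of $[u',(2-\epsilon)u')$ over which we have no control, so the frame must be robust: it must pin an embedding order down to a value in $F(u')$ in the ``yes'' case while cleanly excluding all of $F(u')$ in the ``no'' case, and it must do so at the prescribed order $u'$ using only the $\Theta(u')$-scale swing that the rigid frame can enforce. Verifying that the frame simultaneously achieves this order bookkeeping and the small-embedding correspondence — and that it is genuinely polynomial-time constructible — is the main obstacle; by comparison, the $\mathsf{NP}$ membership argument and the selection of $u'$ are routine.
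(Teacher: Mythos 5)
You have correctly identified the role of the polynomial-occurrence hypothesis (it lets you locate, in polynomial time, a target order $u'$ with $F(u')\neq\emptyset$ and $\max(F(u'))<(2-\epsilon)u'$ that is polynomially bounded in the input size), and your framing of what the gadget must achieve is accurate. But the proposal defers exactly the step that constitutes the proof: the polynomial-time construction of an order-$u'$ system whose embeddability at some order in $F(u')$ encodes the source instance, together with its correctness proof. You say yourself that designing this ``rigid frame'' and verifying it is the main obstacle; without it there is no reduction, and the parts you do supply ($\mathsf{NP}$ membership, selection of $u'$) are, as you note, routine. So this is a genuine gap, not a difference of route.

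Moreover, your choice of source problem makes that gap especially hard to close. Reducing from Colbourn's \textsc{$F^*$-embed} means a ``yes'' instance only guarantees an embedding of \emph{some} unknown order in $[u,2u+1)$; your frame would then have to convert an embedding of uncontrolled order into one whose order lands in the arbitrary set $F(u')$, while in the ``no'' case excluding every order of $F(u')$ — you have no handle on where in $[u,2u+1)$ the source system embeds. The paper avoids this entirely by reducing from $3$-edge colourability of cubic graphs (Holyer), which yields a clean dichotomy at a single designed order: Lemma~\ref{big lemma 2} constructs a \emph{$(u,v,G)$-background}, a PSTS$(u)$ with no embedding of order less than $v$ that has an embedding of order exactly $v$ if and only if $G$ is $3$-edge colourable. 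The mechanism is a leave of the form $E((\overline{K_Z}\vee G)\cup K_{A',D})$: the complete bipartite part $K_{A',D}$ forces at least $d=v-u$ new vertices in any embedding; by Lemma~\ref{L:colouringToDecomp}, $\overline{K_Z}\vee G$ has a $K_3$-decomposition precisely when $G$ is $3$-edge colourable (colour classes become the triangles through $Z$), so non-colourability forces a $(d+1)$-st new vertex, while colourability allows completion to order exactly $v$ via known $K_3$-decomposition results for $K_v-K_w$ and $K_V-(K_A\cup K_B)$. Finally, the reduction takes $v=\max(F(u'))$ — note that your ``fix any $v^*\in F(u')$'' would already fail here, since ``no embedding of order less than $v^*$ and none of order $v^*$'' rules out all of $F(u')$ only when $v^*$ is the maximum. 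If you want to salvage your outline, replacing \textsc{$F^*$-embed} by $3$-edge colourability and building a gadget with this two-part leave is the natural way to do it.
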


Note that the answer to \textsc{$F$-embed} for any PSTS$(u)$ is obviously negative if $F(u)=\emptyset$ and is affirmative if $\max(F(u)) \geq 2u$ (and hence at least $2u+1)$ because embeddings are known to exist for all non-small admissible orders. Thus, Theorem~\ref{T:FembedNPComplete} is best possible except for the $\epsilon$ term and the mild condition of being nontrivial polynomially often. This latter condition merely rules out choices of $F$ that are pathological in the sense that there are exponentially long intervals of orders $u$ for which \textsc{$F$-embed} is trivial for all Steiner triple systems of order $u$.

For vertex-disjoint graphs $G$ and $H$, we let $G \vee H$ denote the graph with vertex set $V(G) \cup V(H)$ and edge set $E(G) \cup E(H) \cup \{xy:x \in V(G),y\in V(H)\}$. In \cite{Bryant2002} the first author made a conjecture about the existence of $K_3$-decompositions of $L \vee K_w$.
It is obvious that a partial Steiner triple system of order $u$ with a leave $L$ can be embedded in a Steiner triple system of order $v = u+w$ if and only if there exists a $K_3$-decomposition of $L \vee K_w$. He conjectured that certain conditions that can be seen to be necessary for the existence of a $K_3$-decomposition of $L \vee K_w$ are also sufficient (see \cite[Lemma 2.1]{Bryant2002} for a proof of their necessity). For graphs $G$ and $H$ we define $G-H$ to be the graph with vertex set $V(G)$ and edge set $E(G) \setminus E(H)$.

\begin{conjecture} [\cite{Bryant2002}]\label{C:bryant}
Let $L$ be a graph with $u$ vertices and let $w$ be a nonnegative integer. Then there exists a $K_3$-decomposition of $L \vee K_w$ if and only if the following conditions are satisfied.
\begin{itemize}
  \item[\textup{(1)}] $\deg_L(x) \equiv w \mod{2}$ for each vertex $x$ of $L$;
  \item[\textup{(2)}] $u+w$ is odd for $w>0$;
  \item[\textup{(3)}] $|E(L)| + uw + \binom{w}{2} \equiv 0 \mod{3}$; and
  \item[\textup{(4)}] There exists a subgraph $G$ of $L$ such that
  \begin{itemize}
      \item[\textup{(i)}] $L - G$ has a $K_3$-decomposition;
      \item[\textup{(ii)}] $w^2 - (u+1)w + 2|E(G)| \geq 0$;
      \item[\textup{(iii)}] $G$ is $w$-edge colourable.
  \end{itemize}
\end{itemize}
\end{conjecture}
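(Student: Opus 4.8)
The plan is to prove sufficiency, since necessity of (1)--(4) is already established in \cite[Lemma 2.1]{Bryant2002}. So I assume (1)--(4) hold, fix a subgraph $G$ of $L$ as in condition (4), and aim to construct a $K_3$-decomposition of $L \vee K_w$. Write the vertex set as $U \cup W$ with $|W|=w$, so the edges split into the edges of $L$, the $\binom{w}{2}$ edges inside $W$, and the $uw$ cross-edges forming a complete bipartite graph between $U$ and $W$. Every triangle in a decomposition has one of four types according to how many of its vertices lie in $W$: type~0 (all three in $U$), type~1 (two in $U$, one in $W$, using one edge of $L$ and two cross-edges), type~2 (one in $U$, two in $W$, using one edge of $W$ and two cross-edges), and type~3 (all three in $W$). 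The role of $G$ is to be exactly the set of edges of $L$ consumed by type~1 triangles.

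First I would dispose of the edges of $L$. By condition (4)(i) the graph $L-G$ has a $K_3$-decomposition, which supplies all the type~0 triangles. For the edges of $G$ I would use the proper $w$-edge colouring guaranteed by (4)(iii): identify the $w$ colours with the $w$ vertices of $W$, and for each edge $xy$ of $G$ coloured by $p\in W$ form the type~1 triangle $\{x,y,p\}$. Properness ensures the cross-edges so used are all distinct (at a vertex $x\in U$ because distinct edges at $x$ get distinct colours, and at $p\in W$ because the $G$-edges coloured $p$ form a matching), so no cross-edge is used twice. After this stage every edge of $L$ is covered and exactly $2|E(G)|$ cross-edges have been consumed.

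It then remains to decompose the graph $H$ on $U\cup W$ consisting of all $\binom{w}{2}$ edges inside $W$ together with the $uw-2|E(G)|$ leftover cross-edges. Because any $K_3$-decomposition forces even degrees, $L-G$ has even degrees, so condition (1) gives $\deg_G(x)\equiv\deg_L(x)\equiv w\mod{2}$ and hence each $x\in U$ has an even number $w-\deg_G(x)$ of leftover cross-edges, running to a determined subset $S_x\subseteq W$ with $|S_x|=w-\deg_G(x)$. These cross-edges can only be covered by type~2 triangles, each pairing two vertices of $S_x$ and charging the corresponding edge inside $W$. Thus the crux is to choose, for every $x\in U$, a perfect matching $M_x$ on $S_x$ so that the $M_x$ are pairwise edge-disjoint as subgraphs of $K_w$ and the edges of $K_w$ left uncovered by $\bigcup_{x} M_x$ themselves admit a $K_3$-decomposition (the type~3 triangles). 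Condition (4)(ii) is precisely the assertion that the number of leftover $W$-edges, $\binom{w}{2}-\tfrac12(uw-2|E(G)|)=\tfrac12\bigl(w^2-(u+1)w+2|E(G)|\bigr)$, is nonnegative, i.e.\ that there is room for this packing.

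The hard part will be this final packing step, and I would attack it by an embedding/amalgamation argument: regard $W$ as the expansion of a single amalgamated vertex and invoke a completion theorem for edge-coloured partial triangle systems (in the spirit of the $v\geq 2u+1$ embedding result quoted above, or of Nash-Williams/de~Werra equitable-colouring techniques), using (1)--(3) to supply the divisibility and parity data and (4)(ii) to supply the capacity inside $W$. I expect this is exactly where the argument is most delicate. The parity and counting conditions (1)--(3) and the bound (4)(ii) control all \emph{degrees} and \emph{totals}, but they do not obviously preclude a \emph{local} clash in which the required matchings $M_x$ cannot be realised edge-disjointly while still leaving a triangle-decomposable remainder in $K_w$. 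Establishing that the stated conditions genuinely force such a simultaneous packing to exist is the heart of the matter, and is the step I would scrutinise most carefully before trusting the conclusion.
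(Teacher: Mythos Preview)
The statement you are trying to prove is a \emph{conjecture}, and the paper's contribution is precisely to \emph{disprove} it (Theorem~\ref{T:counterExample}). So any attempted proof of sufficiency must fail, and the paper pinpoints exactly where: the ``hard part'' you isolate --- choosing pairwise edge-disjoint perfect matchings $M_x$ on the sets $S_x \subseteq W$ so that the remainder of $K_W$ is triangle-decomposable --- is not always possible, even when (1)--(4) all hold. The obstruction is the ``local clash'' you yourself flag at the end. Concretely, the paper builds (Lemma~\ref{L:counterexampleIdea} and Lemma~\ref{L:L1 colouring}) leaves $L$ with $\chi'(L)=w$, $|E(L)|=\tfrac12 w(u-w+1)$ (so that $G=L$ is forced and (4)(ii) holds with equality), and two vertices $d_1,d_2$ of degree $w-2$ such that in \emph{every} proper $w$-edge colouring of $L$ the two colours missing at $d_1$ coincide with the two missing at $d_2$. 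In your framework this forces $S_{d_1}=S_{d_2}$ to be the same $2$-element subset of $W$, hence $M_{d_1}=M_{d_2}$ must be the same single edge of $K_W$, and edge-disjointness is impossible.

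Your outline up to that point is fine and is in fact exactly the decomposition into type-$0,1,2,3$ triangles that the paper uses (in the proof of Lemma~\ref{L:counterexampleIdea}) to show the counterexamples work. But the proposed ``embedding/amalgamation'' rescue cannot succeed: conditions (1)--(4) control degrees and totals only, and the counterexamples show they do not control which colours hit which vertices in the $w$-edge colouring of $G$. Since (4)(ii) with equality leaves zero surplus edges in $K_W$, there is no slack to absorb the forced collision. In short, the step you rightly call ``most delicate'' is not merely delicate but false in general; the correct response to this statement is to exhibit the counterexamples, not to prove it.
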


Theorem~\ref{T:FembedNPComplete} and the main result of \cite{colbourn paper} suggest that there may be no efficient algorithm for determining which small orders a partial Steiner triple system has an embedding into. But Conjecture~\ref{C:bryant} postulates a neat characterization of these orders in terms of chromatic indices of graphs. Here we suggest that things may not be so simple by exhibiting a family of counterexamples to Conjecture~\ref{C:bryant}.

\begin{theorem}\label{T:counterExample}
For each even integer $w \geq 4$, there is a partial Steiner triple system whose leave is a counterexample to Conjecture~$\ref{C:bryant}$.
\end{theorem}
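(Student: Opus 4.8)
The plan is to isolate a necessary condition for the existence of a $K_3$-decomposition of $L\vee K_w$ that is not implied by conditions (1)--(4), and then to build, for each even $w\geq 4$, a leave satisfying (1)--(4) for which this extra condition fails. I will work with a triangle-free leave $L$; since a triangle-free graph has a $K_3$-decomposition only if it is empty, condition (4)(i) then forces $G=L$, so that (4) reduces to the requirement that $L$ be $w$-edge-colourable together with the inequality (4)(ii). A $K_3$-decomposition of $L\vee K_w$ partitions its triangles into those meeting $W$ (the new $w$ vertices) in $0$, $1$, $2$ or $3$ vertices, and triangle-freeness of $L$ rules out the first type. Colouring each edge $xx'$ of $L$ by the vertex $y\in W$ for which $\{x,x',y\}$ is a triangle of the decomposition yields a proper $w$-edge-colouring of $L$, and I will read off the extra condition from this colouring.

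Writing $g_y$ for the number of edges of $L$ coloured $y$, and $\tau=(u-w+1)/2$, the key computation is a count of the edges at a fixed $y\in W$: of its $u$ edges to $U$, exactly $2g_y$ lie in triangles meeting $U$ in two vertices and the remaining $u-2g_y$ lie in triangles meeting $U$ in one vertex; each of the latter consumes one edge of $K_w$ at $y$, so the number of triangles lying entirely in $W$ and incident with $y$ equals $\tfrac12\big((w-1)-(u-2g_y)\big)=g_y-\tau$. Hence every colour class must satisfy $g_y\geq\tau$. Summing over $y$ gives $|E(L)|=\sum_y g_y\geq w\tau$, which is exactly (4)(ii); thus (4)(ii) says only that the \emph{average} colour class meets the bound, while (4)(iii) asserts only that \emph{some} proper $w$-edge-colouring exists. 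Neither guarantees a single proper colouring in which \emph{every} class has at least $\tau$ edges, and this balanced-colouring requirement is the obstruction I intend to exploit.

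The construction I would pursue takes $L$ to be a triangle-free graph that is nearly $w$-regular, chosen so that (4)(ii) is essentially tight and the admissible colour-class sizes are pinned into the narrow window $[\tau,\lfloor u/2\rfloor]$. A rigid core---for instance a $w$-regular bipartite graph, whose proper $w$-edge-colourings are forced to be decompositions into perfect matchings---fixes most class sizes, and to this I would attach a small triangle-free gadget of even degrees that concentrates the colour deficiency so that, in every proper $w$-edge-colouring, some colour is necessarily missed at more than $w-1$ vertices, i.e. has fewer than $\tau$ edges. I would then adjoin an odd number of isolated vertices and scale the core to arrange the parities required by (1)--(2) and the two independent congruences modulo $3$ (condition (3) for $L\vee K_w$, and $\binom{u}{2}-|E(L)|\equiv0 \mod 3$ for the complement), taking the core large enough that $K_u-L$ has large minimum degree and hence, by known triangle-decomposition results for dense graphs, a $K_3$-decomposition; this last point certifies that $L$ really is the leave of a partial Steiner triple system.

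The main obstacle is the universal statement at the heart of the argument: one must prove that no proper $w$-edge-colouring of $L$ attains $g_y\geq\tau$ for all $y$ simultaneously (equivalently, that the near-perfect matchings demanded on $W$ can never be packed edge-disjointly into $K_w$ with a $K_3$-decomposable residue), and this has to be established for every even $w\geq 4$ rather than case by case. The tension is that (4)(ii) pushes the gadget to keep $L$ close to $w$-regular, which tends to restore enough colouring freedom to defeat the bottleneck, so the delicate part is designing a gadget rigid enough to force a deficient colour yet compatible with all of the numerical side-conditions and with the density needed for leave-realizability. Verifying these competing constraints simultaneously, and confirming that the resulting colouring obstruction is genuinely insurmountable, is where the real work lies.
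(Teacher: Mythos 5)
Your derivation of the per-colour necessary condition is correct: in any $K_3$-decomposition of $L\vee K_w$ with $L$ triangle-free, the induced proper $w$-edge-colouring of $L$ must have every colour class of size at least $\tau=\tfrac{1}{2}(u-w+1)$, and condition (4)(ii) (with $G=L$, which triangle-freeness forces) is precisely the averaged version of this. The fatal problem is that the obstruction you plan to exploit can never be realized. By the standard balancing argument for proper edge colourings, any graph with $\chi'(L)\leq w$ admits a proper $w$-edge-colouring whose class sizes pairwise differ by at most one: if classes $M_i$ and $M_j$ satisfy $|M_i|\geq |M_j|+2$, then $M_i\cup M_j$ is a disjoint union of alternating paths and even cycles, some path component has one more $M_i$-edge than $M_j$-edges, and swapping the two colours along that component is again proper and strictly reduces the imbalance. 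Since (4)(ii) gives $|E(L)|\geq w\tau$, every class in such a balanced colouring has at least $\lfloor |E(L)|/w\rfloor\geq\tau$ edges. So any $L$ satisfying (4)(ii) and (4)(iii) automatically has a proper $w$-edge-colouring meeting your bound: the gadget you seek, ``rigid enough to force a deficient colour'' in every colouring, provably does not exist, for any $w$. The step you flagged as ``where the real work lies'' is not merely hard but impossible, so the approach cannot be repaired by a cleverer construction.

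The paper's counterexample uses an obstruction that is immune to these Kempe swaps because it constrains which colours miss which vertices rather than how large the classes are. Its Lemma~\ref{L:counterexampleIdea} asks for two vertices $d_1,d_2$ such that in \emph{every} $w$-edge-colouring of $L$ the set of colours hitting $d_1$ equals the set hitting $d_2$, together with the tightness $|E(L)|=w\tau$ that you also identified. At tightness, counting cross edges against pure edges shows every triangle of a putative decomposition has exactly two cross edges (so triangle-freeness is not needed; the paper's leave even contains a bowtie), all classes of the induced colouring have size exactly $\tau$, and the four cross edges joining $d_1,d_2$ to the two commonly missed colours $1,2\in W$ must then lie in triangles with two vertices in $W$; this forces both $\{1,2,d_1\}$ and $\{1,2,d_2\}$ into the decomposition, using the edge $\{1,2\}$ of $K_w$ twice --- a contradiction. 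Property (iii) is realized by a component on $w+2$ vertices in which $d_1,d_2$ have degree $w-2$ and all other vertices have degree $w$: a matching in a graph of even order misses an even number of vertices, so any colour missing $d_1$ must also miss $d_2$. The remaining components and Dross's theorem then handle the numerical side conditions and leave-realizability, much as you envisaged for your construction.
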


For $w=4$ we explicitly exhibit a system of order 15 whose leave is a counterexample to Conjecture~\ref{C:bryant} (see Example~\ref{E:darryn's example}). For $w \geq 6$, however, we merely establish the existence of appropriate systems with large (unspecified) orders.

\section{Hardness of finding small embeddings of specified orders}

There are sensible questions about small embeddings that Colbourn's result \cite{colbourn paper} does not cover. For example we could ask: does a given partial Steiner triple system of order $u$ have an embedding of order $u+10$? Similarly, we could ask: does a given partial Steiner triple system of order $u$ have an embedding of order between $\frac{3u}{2}$ and $\frac{5u}{3}$? Colbourn's result does not say whether either of these questions is $\mathsf{NP}$-complete. Theorem~\ref{T:FembedNPComplete} shows that many questions of this kind are also hard.

We aim to prove Theorem~\ref{T:FembedNPComplete} by reducing to \textsc{$F$-embed} from the problem of whether a cubic graph is properly $3$-edge colourable, which is well known to be $\mathsf{NP}$-complete \cite{Hol}. Critical to this approach will be the construction of a class of partial Steiner triple systems which we now define. For positive integers $u$ and $v$ and a cubic graph $G$, a \emph{$(u,v,G)$-background} is a PSTS$(u)$ that has no embedding of order less than $v$ and, further, has an embedding of order $v$ if and only if $G$ is $3$-edge colourable.

\begin{lemma} \label{big lemma 2}
If $G$ is a cubic graph of order $n \geq 74$ and $u$ and $v$ are integers such that $v \equiv 1,3 \mod{6}$, $u \geq 4n+43$ and $u \leq v \leq 2u-2n-13$, then there exists a $(u,v,G)$-background.
\end{lemma}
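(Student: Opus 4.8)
The plan is to use throughout the equivalence, noted in the introduction, between small embeddings and $K_3$-decompositions of joins. Writing $w=v-u$, it suffices to construct a graph $L$ on $u$ vertices with three properties: $K_u-L$ has a $K_3$-decomposition (so that $L$ is the leave of some PSTS$(u)$); $L \vee K_{w'}$ has no $K_3$-decomposition for any $w'$ with $0 \leq w' < w$ and $u+w'$ admissible (so the resulting system has no embedding of order less than $v$); and $L \vee K_w$ has a $K_3$-decomposition if and only if $G$ is $3$-edge colourable. The cubic graph is encoded in the last property, while the first two are what the numeric hypotheses $u \geq 4n+43$ and $u \leq v \leq 2u-2n-13$ are really there to guarantee, since they must leave exactly enough room to fit the gadgets below together with the padding needed to satisfy all parity and divisibility constraints.

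For the encoding I would assemble $L$ from three ingredients. First, a \emph{colour gadget}: three distinguished vertices $c_1,c_2,c_3$, each joined in $L$ to every vertex of $G$, together with a copy of $E(G)$ on the vertices of $G$, where the $G$-part is arranged (taking $G$ triangle-free, or replacing each edge by a short gadget) to contain no triangles other than the $\{x,y,c_i\}$ with $xy \in E(G)$. Thus in any $K_3$-decomposition each edge of $G$ must be covered either by some $\{x,y,c_i\}$ or by a triangle using a new vertex. Second, a \emph{lower-bound gadget}: a vertex $p$ whose $L$-neighbourhood is an independent set $\{q_1,\dots,q_w\}$ of size $w$, chosen so that no old vertex is $L$-adjacent to both $p$ and any $q_i$; then each edge $pq_i$ can only be covered by a triangle through a new vertex, and distinct $i$ force distinct new vertices, so that $w'$ new vertices never suffice when $w'<w$. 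Third, \emph{padding}: further vertices and edges used to bring the order up to $u$, to make every degree of $L$ congruent to $w$ modulo $2$ and the edge count satisfy the divisibility condition, and to keep $K_u-L$ triangle-decomposable via a standard sufficiency criterion for dense graphs.

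The forward direction of the third property is then a direct construction. Since $G$ is cubic it has even order, so a proper $3$-edge colouring partitions $E(G)$ into three perfect matchings; covering each edge $xy$ by $\{x,y,c_i\}$ with $i$ its colour then uses up exactly the edges $xc_i$ (properness prevents any reuse) and exactly the edges at each $c_i$. One covers the star at $p$ by the triangles $\{p,q_i,z_i\}$, and covers every remaining cross edge from a $G$-vertex to a new vertex, together with all of $K_w$, by absorbing triangles $\{x,z_i,z_j\}$ and $\{z_i,z_j,z_k\}$ in a near-$1$-factorisation pattern that the padding is designed to close up. The converse is where the cubic graph is genuinely tested: I would show that at $w=v-u$ the count of edges incident with the new vertices is \emph{tight}, so that in any $K_3$-decomposition the new vertices are consumed exactly by the absorbing triangles and the star, and hence cannot cover any edge of $G$. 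With the new vertices thus unavailable, each edge of $G$ must be covered by some $\{x,y,c_i\}$, which is possible precisely when $G$ admits a proper $3$-edge colouring.

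The main obstacle is exactly this rigidity in the converse. If a decomposition were free to cover an edge of $G$ by a triangle through a new vertex, it would effectively have $w$ colours available rather than three, and since $w\geq 4$ makes $G$ trivially $w$-edge colourable this would render the system embeddable regardless of whether $G$ is class $1$, destroying the reduction. Forcing the absorbing pattern therefore requires the edge count on the new part to leave room only for the star and the absorbing triangles and none for a triangle covering an edge of $G$ (the analogue of condition $(4)(\mathrm{ii})$ of Conjecture~\ref{C:bryant}). Achieving this tightness simultaneously with the required parities, the divisibility condition, the validity of $L$ as a leave, and uniformity over the whole admissible range of $v$ is what pins down the precise bounds in the statement, and is the part of the argument I expect to be most delicate.
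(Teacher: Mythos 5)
Your overall framework (realise a carefully designed graph $L$ as the leave of a PSTS$(u)$, with a colour gadget encoding $G$ and a forcing gadget pinning down the embedding order) is the same as the paper's, and your colour gadget is exactly the paper's $\overline{K_Z}\vee G$; incidentally, your worry about triangles inside $G$ is unnecessary (a counting argument, the paper's Lemma~\ref{L:colouringToDecomp}(ii), handles them, and you are not free to modify $G$ anyway since the lemma quantifies over all cubic graphs). The genuine gap is in the converse direction at order $v$, which you yourself flag as the delicate point: your proposed mechanism --- a global edge count forcing the new vertices to be ``consumed'' by the star and absorbing triangles --- cannot work with your gadget layout. Writing $t_i$ for the number of triangles of a decomposition of $L\vee K_w$ with exactly $i$ vertices in $V(L)$, every cross edge lies in a triangle using exactly two cross edges, so $2(t_1+t_2)=uw$ and hence $3(t_0+t_3)=|E(L)|+\binom{w}{2}-\tfrac{1}{2}uw$. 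Thus a global count controls only the \emph{number} of triangles lying entirely inside $V(L)$ or entirely inside $W$, never their \emph{location}: it cannot force the edges of $G$ in particular to be the ones covered internally. Worse, if you make the count tight ($|E(L)|=\tfrac{1}{2}w(u-w+1)$, your analogue of (4)(ii) with equality), you get $t_0=t_3=0$, i.e.\ \emph{no} internal triangles at all, eliminating the very triangles $\{x,y,c_i\}$ your reduction needs --- this tight-count rigidity is precisely what the paper exploits in the opposite direction, in Lemma~\ref{L:counterexampleIdea}, to build counterexamples to Conjecture~\ref{C:bryant}. With a slack count, nothing stops a decomposition from covering an edge $xy\in E(G)$ by $\{x,y,z\}$ with $z$ new: your star saturates the cross edges at $p$ only, while the cross edges at $c_1,c_2,c_3$ and at $V(G)$ remain free, and since $w\geq 4$ Vizing's theorem gives a proper $w$-edge colouring of $G$ that can be used to cover $E(G)$ entirely with new vertices. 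So for non-$3$-edge-colourable $G$ your system will in general still embed in order $v$, and the reduction collapses.

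The missing idea is that the forcing must be \emph{local at the colour vertices}, not global. The paper takes the leave to be $(\overline{K_Z}\vee G)\cup K_{A',D}$ with $Z\subseteq A'$ and $|D|=d=v-u$: every vertex of $A'$, \emph{including the three colour vertices}, has $d$ leave-edges into the independent set $D$, and (because $L$ has no triangle meeting $K_{A',D}$) each such edge must be covered by a triangle through a distinct new vertex. This yields your lower bound $|W|\geq d$, but also, at order $v$ exactly, it consumes \emph{all} $d$ cross edges at each $z\in Z$; hence the edges from $Z$ to $V(G)$ must be covered by triangles inside $U$, and the counting in Lemma~\ref{L:colouringToDecomp}(ii) then shows this forces a proper $3$-edge colouring of $G$. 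Your vertex $p$ plays only the role of a vertex of $A'\setminus Z$; without attaching the same structure to $c_1,c_2,c_3$ the ``only if'' direction fails. Separately, your forward direction (``absorbing triangles in a near-$1$-factorisation pattern that the padding is designed to close up'') is not a proof: the paper closes it with the Colbourn--Oravas--Rees theorem giving a $K_3$-decomposition of $K_V-(K_A\cup K_B)$, and realises the leave in the first place via Horsley's embedding theorem and the Doyen--Wilson theorem; some such existence results are unavoidable here, and invoking them is also where the numerical hypotheses on $u$ and $v$ are actually spent.
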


Before proceeding to prove Lemma~\ref{big lemma 2}, we show how Theorem~\ref{T:FembedNPComplete} can be proved from Lemma~\ref{big lemma 2}.

\begin{proof}[\textup{\textbf{Proof that  Lemma~\ref{big lemma 2} implies Theorem~\ref{T:FembedNPComplete}.}}]
Let $F$ be an admissible function and let $\epsilon>0$ be a real number such that integers $u$ for which $F(u)\neq \emptyset$ and $\max(F(u))<(2-\epsilon)u$ occur polynomially often.
We reduce to \textsc{$F$-embed} from the problem of whether a cubic graph is $3$-edge colourable, which is well known to be $\mathsf{NP}$-complete \cite{Hol}. Of course, this latter problem remains $\mathsf{NP}$-complete if we exclude finitely many inputs by requiring that the graph have order at least $74$.

Suppose we are given a cubic graph $G$ of order $n \geq 74$. By the properties of $F$ and $\epsilon$ we can choose an integer $u$ such that $u \geq \max(4n+43,\frac{1}{\epsilon}(2n+13))$, $u$ is polynomial in $n$ for some polynomial independent of $n$, $F(u) \neq \emptyset$
and $\max(F(u))<(2-\epsilon)u$. Then, because $F$ is admissible and $u \geq \frac{1}{\epsilon}(2n+13)$, we have $u \leq \max(F(u)) \leq 2u-2n-13$. Let $v=\max(F(u))$. Thus $u$ and $v$ satisfy the hypothesis of Lemma \ref{big lemma 2} and hence there exists a $(u,v,G)$-background $(U,\mathcal{A})$. Because $(U,\mathcal{A})$ is a $(u,v,G)$-background, the answer to \textsc{$F$-embed} for input $(U,\mathcal{A})$ will be affirmative if and only if $G$ is $3$-edge colourable.
\end{proof}

So our goal in the rest of this section will be to prove Lemma~\ref{big lemma 2}. We introduce some further notation that we will require. For graphs $G$ and $H$ we define $G \cup H$ to be the graph with vertex set $V(G) \cup V(H)$ and edge set $E(G) \cup E(H)$.
For a set $S$ we denote the complete graph with vertex set $S$ by $K_S$ and denote its complement, the graph with vertex set $S$ and empty edge set, by $\overline{K_S}$. For disjoint sets $S$ and $T$, we denote the complete bipartite graph with parts $S$ and $T$ by $K_{S,T}$. We say a graph is \emph{even} if each of its vertices has even degree. A \emph{$K_3$-packing} of a graph $G$ is a $K_3$-decomposition of some subgraph $H$ of $G$ and the \emph{leave} of such a packing is the graph $G-H$. It will be useful for us to blur the distinction between partial Steiner triple systems and $K_3$-packings by representing the latter as sets of vertex triples rather than as sets of triangles. We do this throughout the paper.

\begin{lemma}\label{L:colouringToDecomp}
Let $G$ be a cubic graph and let $Z$ be a vertex set such that $|Z|=3$ and $Z$ is disjoint from $V(G)$.
\begin{itemize}
    \item[\textup{(i)}]
If $G$ is $3$-edge colourable then there is a $K_3$-decomposition of $\overline{K_Z} \vee G$.
    \item[\textup{(ii)}]
If $G$ is not $3$-edge colourable then the leave of any $K_3$-packing of $\overline{K_Z} \vee G$ contains an edge incident with a vertex in $Z$.
\end{itemize}
\end{lemma}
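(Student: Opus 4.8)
The plan is to exploit the structure of $\overline{K_Z} \vee G$ that arises because $Z$ is an independent set: no triangle can contain two vertices of $Z$, so every triangle is either a triangle of $G$ or has the form $\{x,y,z\}$ with $xy \in E(G)$ and $z \in Z$. The whole lemma rests on the dictionary identifying the three vertices $z_1,z_2,z_3$ of $Z$ with three colour classes, matching a proper $3$-edge colouring of $G$ to a way of covering the edges between $Z$ and $V(G)$ by such triangles. I would set this observation up first and then treat the two parts separately.

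For part (i), I would take a proper $3$-edge colouring of $G$ with colour set $\{1,2,3\}$, identify colour $i$ with $z_i$, and form the triangle $\{x,y,z_i\}$ for every edge $xy$ of $G$ coloured $i$. Each edge of $G$ lies in exactly one such triangle by construction. For the edges between $Z$ and $V(G)$, the key observation is that in a cubic graph a proper $3$-edge colouring meets every vertex with exactly one edge of each colour; hence for each vertex $x$ and each colour $i$ there is a unique edge $xy$ coloured $i$, so the edge $xz_i$ is covered exactly once. Since there are no edges inside $Z$, this collection of triangles is a $K_3$-decomposition of $\overline{K_Z} \vee G$.

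For part (ii) I would argue the contrapositive: if some $K_3$-packing has a leave containing no edge incident with $Z$, then $G$ is $3$-edge colourable. So suppose every edge between $Z$ and $V(G)$ is covered by the packing. Fix $z_i$; each triangle through $z_i$ has the form $\{x,y,z_i\}$ with $xy \in E(G)$ and covers precisely the two edges $xz_i$ and $yz_i$. Because every edge $xz_i$ is covered exactly once, every vertex of $G$ lies in exactly one triangle through $z_i$, so the $G$-edges of these triangles form a perfect matching $M_i$ of $G$. The three matchings $M_1,M_2,M_3$ are pairwise edge-disjoint, since a common edge $xy \in M_i \cap M_j$ would force both $\{x,y,z_i\}$ and $\{x,y,z_j\}$ into the packing, and these distinct triangles share the edge $xy$, contradicting that a packing uses each edge at most once. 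A counting step then closes the argument: three perfect matchings contribute $3\cdot\frac{n}{2}=\frac{3n}{2}=|E(G)|$ edges, so being disjoint they partition $E(G)$, and colouring each edge by the index of the matching containing it is a proper $3$-edge colouring of $G$.

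There is no serious obstacle here; the crux is simply recognising in part (ii) that covering all the $Z$-incident edges forces a perfect matching at each $z_i$, and that disjointness plus edge-counting upgrades these three matchings to a proper colouring. The only point demanding a little care is the reliance, in both directions, on the fact that a proper $3$-edge colouring of a cubic graph is exactly a partition of its edge set into three perfect matchings, which is what makes the correspondence between colourings and decompositions exact.
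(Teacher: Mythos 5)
Your proposal is correct and takes essentially the same approach as the paper: part (i) is the identical construction (identify the three colours with the vertices of $Z$ and form the triangles $\{x,y,\gamma(xy)\}$), and part (ii) uses the same dictionary in reverse, with the paper phrasing it as a contradiction and defining the colouring directly from the triangles through $Z$, while you phrase it contrapositively via three pairwise disjoint perfect matchings --- a purely cosmetic reorganisation of the same counting argument ($3\cdot\tfrac{n}{2}=|E(G)|$).
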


\begin{proof} Let $n$ be the order of $G$.
\begin{itemize}
    \item[(i)]
Assume $G$ is $3$-edge colourable. Let $\gamma$ be a proper $3$-edge colouring of $G$ with colour set $Z$. Then
\[\mathcal{P}=\{\{x,y,\gamma(xy)\}:xy \in E(G)\}\]
is a $K_3$-decomposition of $\overline{K_Z} \vee G$. Each edge of $G$ is obviously in exactly one triangle in $\mathcal{P}$, and the fact that $\gamma$ is a proper $3$-edge colouring implies that each edge in $K_{Z,V(G)}$ is in exactly one triangle in $\mathcal{P}$.
     \item[(ii)]
Suppose for a contradiction that $G$ is not $3$-edge colourable and that there is a triangle packing $\mathcal{P}$ of $\overline{K_Z} \vee G$ such that every edge incident with a vertex in $Z$ is in some triangle of $\mathcal{P}$. Then each vertex in $Z$ is in $\frac{n}{2}$ triangles in $\mathcal{P}$ and hence for every edge $xy$ in $E(G)$ there is a triangle $\{x,y,z\}$ in $\mathcal{P}$ for some $z \in Z$. Define an edge colouring $\gamma$ of $G$ with colour set $Z$ by setting $\gamma(xy)=z$ for each $xy \in E(G)$, where $z$ is the unique element of $Z$ such that $\{x,y,z\} \in \mathcal{P}$. Then $\gamma$ is a proper $3$-edge colouring of $G$, which is a contradiction. \qedhere
\end{itemize}
\end{proof}

Lemma~\ref{lemma222} is our first step toward constructing $(u,v,G)$-backgrounds.

\begin{lemma}
\label{lemma222}
Let $G$ be a cubic graph of order $n \geq 74$. Let $A$ be a vertex set such that $V(G) \subseteq A$, $|A| \geq 2n+1$ and $|A| \equiv 1,3 \mod{6}$, and let $Z \subseteq A \setminus V(G)$ such that $|Z|=3$. Then there exists a partial Steiner triple system $(A,\mathcal{B}_0)$ whose leave $L$ has edge set $E(\overline{K_Z} \vee G)$.
\end{lemma}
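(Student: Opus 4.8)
The plan is to recast the conclusion as a decomposition statement and then strip away the three vertices of $Z$, which are the only source of high-degree edges in the complement, reducing everything to a clean ``complete minus a cubic graph'' problem. Write $B=V(G)$ and $W=A\setminus(Z\cup B)$, so that $A$ is the disjoint union $Z\cup B\cup W$ with $|B|=n$, $|Z|=3$ and $w:=|W|=|A|-n-3\geq n-2\geq 72$. A partial Steiner triple system on $A$ with leave of edge set $E(\overline{K_Z}\vee G)$ exists if and only if the graph $H:=K_A-(\overline{K_Z}\vee G)$ has a $K_3$-decomposition. By inspection $H$ consists of the triangle $K_Z$ on $Z$, the complement $K_B-G$ on $B$, the clique $K_W$ on $W$, and all edges of $K_{W,B}$ and $K_{W,Z}$, while having no edge joining $Z$ to $B$. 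First I would check the (necessary) conditions: since $G$ is cubic its order $n$ is even, and since $|A|$ is admissible it is odd, so $w$ and $n+w$ are both even; a short computation then shows every vertex of $H$ has even degree, and because $\binom{|A|}{2}\equiv 0\pmod 3$ while $|E(\overline{K_Z}\vee G)|=\tfrac{9n}{2}\equiv 0\pmod 3$ we also have $|E(H)|\equiv 0\pmod 3$.

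Next I would dispose of $Z$ explicitly. Since $w$ is even, $K_W$ admits a $1$-factorisation into $w-1\geq 3$ perfect matchings; fix three edge-disjoint ones $M_1,M_2,M_3$ and write $Z=\{z_1,z_2,z_3\}$. Into the packing I would put the triple $\{z_1,z_2,z_3\}$, which covers $K_Z$, together with the triples $\{z_i,x,y\}$ for each $\{x,y\}\in M_i$ and each $i\in\{1,2,3\}$; as $M_i$ is a perfect matching these cover exactly the edges from $z_i$ to $W$, so all of $K_{W,Z}$ is covered. The only edges of $H$ these triples consume beyond the edges incident with $Z$ are the $\tfrac{3w}{2}$ edges of $M_1\cup M_2\cup M_3$ in $K_W$. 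Removing all of these triples leaves the task of finding a $K_3$-decomposition of the graph on $B\cup W$ obtained from $K_{B\cup W}$ by deleting $J:=G\cup(M_1\cup M_2\cup M_3)$. Here $J$ is $3$-regular (cubic on $B$, a union of three perfect matchings on $W$, with nothing between), so the remaining problem is precisely to decompose $K_m-J$ into triangles, where $m:=|A|-3=n+w\geq 146$ and, since $|A|\equiv 1,3\pmod 6$, we have $m\equiv 0,4\pmod 6$.

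The crux, and what I expect to be the main obstacle, is this last decomposition of $K_m-J$ for an arbitrary $3$-regular $J$. The necessary conditions are in hand: every vertex of $K_m-J$ has degree $m-4$, which is even, and $|E(K_m-J)|=\binom{m}{2}-\tfrac{3m}{2}\equiv 0\pmod 3$ for $m\equiv 0,4\pmod 6$. Moreover the minimum degree $m-4$ is enormous relative to $m$, so morally $K_m-J$ ought to decompose, and I would finish by invoking a triangle-decomposition theorem for complete graphs minus a bounded-degree defect (equivalently, by showing directly that every $3$-regular graph on $m$ vertices is the leave of a partial Steiner triple system of order $m$ when $m\equiv 0,4\pmod 6$ is large). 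The delicate point is that this must hold for \emph{every} cubic $G$, including those that are not $3$-edge-colourable: one cannot simply decompose the leave $\overline{K_Z}\vee G$, since by Lemma~\ref{L:colouringToDecomp} that is possible exactly when $G$ is colourable, so the abundant vertices of $W$ must be used to absorb the complement $K_B-G$ together with the bipartite edges $K_{W,B}$ uniformly over all $G$. Establishing this decomposition with an explicit, moderate threshold on $m$, rather than an asymptotic one, is where the real work lies.
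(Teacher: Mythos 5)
Your reduction is correct as far as it goes, but it stops exactly where the proof has to happen, and the particular reduction you chose makes the remaining step \emph{harder} to discharge, not easier. You correctly recast the lemma as the existence of a $K_3$-decomposition of $H=K_A-(\overline{K_Z}\vee G)$, and your bookkeeping (parities, the count $\tfrac{9n}{2}\equiv 0 \pmod 3$, the $1$-factorisation of $K_W$) is all fine. The gap is the final claim: that $K_m-J$ has a $K_3$-decomposition for \emph{every} $3$-regular graph $J$ on $m\equiv 0,4\pmod 6$ vertices, with an explicit threshold small enough to cover $m=|A|-3$, which can be as small as $146$. You acknowledge this is ``where the real work lies,'' but no cited or sketched result closes it. The tools that would give it asymptotically (Dross \cite{Dross2016} combined with Barber--K\"{u}hn--Lo--Osthus \cite{BarKuhLoOst}, i.e.\ the route of Lemma~\ref{L:DrossConsequence}) come with an unspecified and enormous threshold $n_0$, so they cannot prove the lemma as stated, which must hold for all admissible $|A|\geq 2n+1$ with $n\geq 74$; note also that Lemma~\ref{L:DrossConsequence} itself is stated for \emph{even} graphs of odd order, whereas your $J$ is $3$-regular of even order, so it does not even apply verbatim.

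The deeper problem is that your preprocessing destroys the structure that the one available explicit-bound theorem needs. The paper's proof applies Theorem~5.2 of \cite{Hors2014} directly to $H$: that theorem requires, besides evenness, divisibility and a near-complete edge count, that at least $\tfrac{1}{8}(3a+17)$ vertices have \emph{full} degree $a-1$, and in $H$ the $|A|-n-3$ vertices of $W$ have exactly this property. By spending the triple $\{z_1,z_2,z_3\}$ and the three matchings $M_1,M_2,M_3$ inside $K_W$ to cover $K_Z$ and $K_{W,Z}$, you lower every vertex of $W$ to degree $m-4$ in the residual graph $K_m-J$, so the residual graph has \emph{no} full-degree vertices and Horsley's theorem no longer applies to it. In short: the explicit handling of $Z$ is unnecessary (the known theorem absorbs $Z$ automatically) and counterproductive (it converts a problem that a known explicit theorem solves into one for which only asymptotic tools exist). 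To repair your argument you would either have to prove the ``$K_m$ minus an arbitrary cubic graph'' decomposition theorem with a concrete threshold from scratch, or abandon the preprocessing and apply \cite[Theorem~5.2]{Hors2014} to $H$ itself, which is precisely what the paper does.
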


\begin{proof}
By \cite[Theorem 5.2]{Hors2014}, if $G'$ is an even graph of order $a$ such that $a \geq 103$, $|E(G')| \equiv  0 \mod{3}$, $|E(G')| \geq \binom{a}{2} - \frac{1}{128}(3a^2 - 54a - 409)$ and at least $\frac{1}{8}(3a+17)$ vertices of $G'$ have degree $a-1$, then there is a $K_3$-decomposition of $G'$.

Let $a=|A|$ and let $G'=K_A-(\overline{K_Z} \vee G)$. We will complete the proof by showing that $G'$ satisfies the conditions above. Note that $K_A$ is even because $a \equiv 1,3 \mod{6}$ and $\overline{K_Z} \vee G$ is even because $n$ is even, so $G'$ is even. Next, we have $|V(G')|=a > 103$ because $a \geq 2n+1$ and $n \geq 74$. Now
$|E(G')|= \binom{a}{2} - (3n + \frac{3n}{2}) = \binom{a}{2} - \frac{9n}{2}$ and hence $|E(G')| \equiv  0 \mod{3}$ because $a \equiv 1,3 \mod{6}$. Also $|E(G')| \geq \binom{a}{2} - \frac{1}{128}(3a^2 - 54a - 409)$ because
\[\tfrac{1}{128}(3a^2 - 54a - 409) \geq \tfrac{1}{128}((6n-51)(2n+1) - 409) \geq \tfrac{393}{128}(2n+1)-\tfrac{409}{128} > \tfrac{9n}{2}\]
where the first inequality holds because $a \geq 2n+1$, and the second and third hold because $n \geq 74$. Finally, $a-n-3$ vertices in $G'$ have degree $a-1$ and $a-n-3 > \frac{1}{8}(3a+17)$ because $5a \geq 10n+5 > 8n+41$ where the first inequality holds because $a \geq 2n+1$ and the second holds because $n \geq 74$.
\end{proof}

We are now able to construct some of the $(u,v,G)$-backgrounds we require. We do this in Lemma~\ref{L:createCandidate} and then prove that they are in fact $(u,v,G)$-backgrounds in Lemma~\ref{big lemma}.

\begin{lemma}
\label{L:createCandidate}
Let $G$ be a cubic graph of order $n \geq 74$ and let $u$ and $d$ be integers such that $d \geq n + 2$, $u \geq d+2n+3$, $d \equiv 0 \mod{6}$ and $u \equiv 1,3 \mod{6}$. There exists a PSTS$(u)$ $(U,\mathcal{A})$ whose leave has edge set $E((\overline{K_Z} \vee G) \cup K_{A',D})$ where
\begin{itemize}
    \item
$\{A',D,V(G) \cup \{x\}\}$ is a partition of $U$ for some $x \in U \setminus V(G)$;
    \item
$|D|=d$;
    \item
$Z \subseteq A'$ with $|Z|=3$.
\end{itemize}

\end{lemma}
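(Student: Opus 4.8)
The plan is to build $(U,\mathcal{A})$ in two stages: first use Lemma~\ref{lemma222} to realise the $\overline{K_Z} \vee G$ part of the leave on a vertex set avoiding $D$, and then add triangles covering all remaining edges incident with $D$ except those of $K_{A',D}$. Accordingly, I would take pairwise disjoint sets $A'$, $D$, $V(G)$ together with one further vertex $x$, where $|A'|=u-d-n-1$ and $|D|=d$, and choose $Z \subseteq A'$ with $|Z|=3$; this is possible since $|A'| \geq n+2 \geq 3$ by $u \geq d+2n+3$. Put $U = A' \cup D \cup V(G) \cup \{x\}$ (so $|U|=u$), write $W = V(G) \cup \{x\}$ (so $|W|=n+1$), and set $A = A' \cup W$, so that $\{A',D,V(G)\cup\{x\}\}$ partitions $U$ and $|A|=u-d$.

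For the first stage I would apply Lemma~\ref{lemma222} with the vertex set $A$. Its hypotheses hold: $V(G) \subseteq A$; $|A|=u-d \geq 2n+3 \geq 2n+1$; $|A| \equiv u \equiv 1,3 \mod{6}$ because $d \equiv 0 \mod{6}$; and $Z \subseteq A' = A \setminus W \subseteq A \setminus V(G)$ with $|Z|=3$. Lemma~\ref{lemma222} then yields a partial Steiner triple system $(A,\mathcal{B}_0)$ whose leave within $A$ has edge set $E(\overline{K_Z}\vee G)$; equivalently, $\mathcal{B}_0$ is a $K_3$-decomposition of $K_A-(\overline{K_Z}\vee G)$.

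For the second stage I want a $K_3$-decomposition of $K_D \vee \overline{K_W}$, the complete graph on $D \cup W$ with the $\binom{|W|}{2}$ edges inside $W$ deleted. Every such triangle has at least two vertices in $D$, so its edges lie in $E(K_D) \cup E(K_{D,W})$, which is disjoint from the edges covered by $\mathcal{B}_0$ (these lie inside $A$ and so never meet $D$); hence the union $\mathcal{A}$ of $\mathcal{B}_0$ with this decomposition is a partial Steiner triple system on $U$. Its covered edges are exactly $(K_A-(\overline{K_Z}\vee G)) \cup K_D \cup K_{D,W}$, so a short check shows the remaining edges of $K_U$ are precisely $E(\overline{K_Z}\vee G) \cup E(K_{A',D})$: since $A' \cap W = \emptyset$ no $A'$--$D$ edge is ever covered, while every $D$--$D$ and $D$--$W$ edge is. Thus $\mathcal{A}$ has the required leave.

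It remains to justify that $K_D \vee \overline{K_W}$ has a $K_3$-decomposition, and this is the step I expect to be the main obstacle. The vertex $x$ is included precisely so that $|W|=n+1$ is odd (as $n$ is even), which is what makes the $D$-vertices have even degree $|D|-1+|W|$; the $W$-vertices have even degree $|D|=d$, and the edge count $\binom{d}{2}+d|W|$ is divisible by $3$ because $d \equiv 0 \mod{6}$. The genuinely substantive point is the size condition: the hole $W$ forces each of its vertices into $\tfrac{d}{2}$ triangles, each consuming one internal edge of $D$, so a decomposition can exist only if $|D| \geq |W|+1$, equivalently $|D \cup W| \geq 2|W|+1$; this holds with no slack because $d \geq n+2 = |W|+1$ is exactly the hypothesis. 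I would finish by invoking the known characterisation of $K_3$-decompositions of a complete graph with a hole (equivalently, of $3$-GDDs of type $1^{d}(n+1)^1$): the divisibility conditions together with $|D \cup W| \geq 2|W|+1$ suffice, and the finitely many small exceptions are avoided since $|W|=n+1 \geq 75$. As a sanity check on the threshold, in the tight case $d=n+2$ this decomposition is just a $1$-factorisation of $K_D$ into $d-1=|W|$ perfect matchings with one factor assigned to each vertex of $W$; for larger $d$ one is left with a regular graph on $D$ that must also be triangulated, which is exactly where the cited theorem does the work.
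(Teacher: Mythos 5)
Your proposal is correct and takes essentially the same approach as the paper: your second-stage graph $K_D \vee \overline{K_W}$ is exactly the paper's $K_{A'' \cup D} - K_{A''}$, and the ``complete graph with a hole'' characterisation you invoke is precisely the Doyen--Wilson/Mendelsohn--Rosa result the paper cites (which in fact has no small exceptions, so your hedge about avoiding them is unnecessary but harmless). Your verification of the oddness, divisibility and $d \geq n+2$ conditions matches the paper's line for line.
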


\begin{proof}
Let $U$ be a set with $|U|=u$ and $V(G) \subseteq U$, and let $\{A',D,V(G) \cup \{x\}\}$ be a partition of $U$ satisfying the conditions of the lemma. Let $A''=V(G) \cup \{x\}$ and let $A = U \setminus D = A'\cup A''$.

Observe that $|A|=u-d \geq 2n+3$ and $|A| \equiv 1,3 \mod{6}$ because $u \equiv 1,3 \mod{6}$ and $d \equiv 0 \mod{6}$. Thus by Lemma~\ref{lemma222} there exists a partial Steiner triple system $(A,\mathcal{B}_0)$ whose leave has edge set $E(\overline{K_Z} \vee G)$. If there exists a $K_3$-decomposition $\mathcal{B}_1$ of $K_{A'' \cup D}-K_{A''}$, then $(U,\mathcal{B}_0 \cup \mathcal{B}_1)$ will indeed be a partial Steiner triple system whose leave has edge set $E((\overline{K_Z} \vee G) \cup K_{A',D})$ and we are finished. So it suffices to show that such a $K_3$-decomposition exists.

It is known (see \cite{DoyWil,MenRos}) that a $K_3$-decomposition of $K_v - K_w$ exists if and only if $v$ and $w$ are odd, $v \geq 2w+1$, and $\binom{v}{2} - \binom{w}{2} \equiv 0 \mod{3}$. Now $|A'' \cup D|=n+1+d$ and $|A''|=n+1$ are both odd because $n$ and $d$ are even, and $n+1+d \geq 2n+3$ because $d \geq n+2$. Finally, $\binom{d + n+1}{2} - \binom{n+1}{2} = \frac{1}{2}d(d + 2n +1)\equiv 0 \mod{3}$ because $d\equiv 0 \mod{6}$.
\end{proof}

\begin{lemma} \label{big lemma}
Let $G$ be a cubic graph of order $n \geq 74$ and let $u$ and $v$ be integers such that $u \geq 3n+5$, $\frac{1}{2}(3u-n-1) \leq v \leq 2u-2n-3$ and $u \equiv v \equiv 1,3 \mod{6}$. Then there exists a $(u,v,G)$-background.
\end{lemma}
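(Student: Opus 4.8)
The plan is to take the candidate system produced by Lemma~\ref{L:createCandidate} and calibrate its parameter $d$ so that it becomes a $(u,v,G)$-background. Write $w=v-u$; since $u\equiv v\equiv 1,3\pmod 6$ the integer $w$ is even, and the hypotheses give $w\geq n+2$ and $\tfrac12(u-n-1)\leq w\leq u-2n-3$. An embedding of $(U,\A)$ of order $u+w'$ exists if and only if $L\vee K_{w'}$ has a $K_3$-decomposition, where $L=(\overline{K_Z}\vee G)\cup K_{A',D}$ is the leave (with $x$ isolated). I would choose $d$ to be a multiple of $6$ in the interval $[n+2,\,u-2n-3]$ for which $\max(|A'|,d)=w$, where $|A'|=u-d-n-1$; the displayed inequalities on $v$ are exactly what make such a choice available, the prototypical case being $d=w$ (so that $d\geq|A'|$). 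The point of this calibration is that the complete bipartite graph $K_{A',D}$ will act as a gadget whose only means of being covered forces the number of added vertices up to precisely $w$.

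For the lower bound I would argue directly. In $L\vee K_{w'}$ the only common neighbours of a vertex of $A'$ and a vertex of $D$ lie among the $w'$ added vertices, so every edge of $K_{A',D}$ must be covered by a triangle with its apex in the added set $W$. Each added vertex is incident with $|A'|$ edges to $A'$ and $d$ edges to $D$, so it lies in at most $\min(|A'|,d)$ such triangles; counting the $|A'|d$ edges of $K_{A',D}$ then forces $w'\geq\max(|A'|,d)=w$, so $(U,\A)$ has no embedding of order less than $v$. When $w'=w$ this bound is tight, and (with $d=w\geq|A'|$) tightness forces every edge from an added vertex to $Z$ to be used in a triangle covering an edge of $K_{A',D}$. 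Consequently no edge of $K_{Z,V(G)}$ can be covered by a triangle through $W$, so each such edge is covered within $V(G)\cup Z$; the triangles lying inside $V(G)\cup Z$ then form a $K_3$-packing of $\overline{K_Z}\vee G$ whose leave contains no edge incident with $Z$, and Lemma~\ref{L:colouringToDecomp}(ii) forces $G$ to be $3$-edge colourable.

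For the converse I would construct an embedding of order $v$ directly when $G$ is $3$-edge colourable, since the conditions known to be necessary are not in general sufficient. First use Lemma~\ref{L:colouringToDecomp}(i) to $K_3$-decompose $\overline{K_Z}\vee G$, covering all edges of $L$ on $V(G)\cup Z$. The vertices of $V(G)\cup\{x\}$ are now joined only to $W$, so I would cover the edges from $W$ to each such vertex by triangles lying otherwise inside $W$: as $|W|=w$ is even and $w\geq n+2$, the graph $K_W$ has a $1$-factorization into $w-1$ perfect matchings, and assigning a distinct perfect matching to each of the $n+1$ vertices of $V(G)\cup\{x\}$ covers all these edges. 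The remaining graph lives on $W\cup A'\cup D$, is even, has size divisible by $3$, and is dense (essentially a complete multipartite graph with parts $A'$, $D$ and the singletons of $W$, less a few $1$-factors of $K_W$); a $K_3$-decomposition of it should follow from a dense-graph decomposition result such as \cite[Theorem 5.2]{Hors2014}. Together these triangles give the required $K_3$-decomposition of $L\vee K_w$.

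I expect the main obstacle to be the negative direction: making the tightness argument rigorous, in particular verifying that at $w'=w$ the added vertices are so fully occupied by the $K_{A',D}$ gadget that no triangle can simultaneously meet $W$ and cover an edge of $K_{Z,V(G)}$. Closely related is the arithmetic of selecting $d$: one must pick a multiple of $6$ realising $\max(|A'|,d)=w$ for every admissible residue of $w$, and reconcile this with the divisibility and degree hypotheses of the dense-graph theorem invoked in the positive direction. The remaining steps---the common-neighbour bound and the explicit covering of the join edges---I expect to be routine once the parameters have been calibrated.
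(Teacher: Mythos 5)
Your setup and your negative direction are sound, and the latter is genuinely different from the paper's. You take the system of Lemma~\ref{L:createCandidate} with $d=v-u$; note that the hypothesis $u\equiv v\equiv 1,3\pmod 6$ is used in the paper to mean $u\equiv v\pmod 6$, so $d\equiv 0\pmod 6$ automatically and your worry about ``every admissible residue of $w$'' dissolves, while $\tfrac12(3u-n-1)\leq v$ is precisely the inequality $|A'|=u-d-n-1\leq d$ that your tightness step needs. For the lower bound, you count all $|A'|d$ edges of the gadget $K_{A',D}$ against the capacity $\min(|A'|,d)$ of each added vertex and then exploit tightness to force all $W$--$Z$ edges into gadget triangles; the paper instead argues locally (each $y\in A'\setminus Z$ has $d$ leave-edges requiring $d$ distinct new vertices, and if $G$ is not $3$-edge colourable, Lemma~\ref{L:colouringToDecomp}(ii) supplies $z\in Z$ of leave-degree at least $d+1$, so $|W|\geq d+1$). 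Both routes reduce correctly to Lemma~\ref{L:colouringToDecomp}(ii).

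The genuine gap is the final step of your positive direction. After applying Lemma~\ref{L:colouringToDecomp}(i) and your $n+1$ perfect matchings of $K_W$, the graph still to be decomposed is the complete tripartite graph with parts $A'$, $D$, $W$ together with $K_W$ minus $n+1$ one-factors. This is nowhere near the hypotheses of \cite[Theorem 5.2]{Hors2014} as quoted in the proof of Lemma~\ref{lemma222}: that theorem requires an $a$-vertex graph to be missing at most $\tfrac{1}{128}(3a^2-54a-409)\approx 0.023\,a^2$ edges and to have at least $\tfrac18(3a+17)$ vertices of full degree $a-1$. Your residual graph has $a=|A'|+d+w\leq 3d$ vertices but is missing at least $\binom{d}{2}\geq \tfrac{1}{18}a^2-O(a)$ edges (the non-edges inside $D$ alone), a constant fraction, and it has no vertex of full degree at all: each vertex of $A'$ misses the other $|A'|-1\geq n+1$ vertices of $A'$, each vertex of $D$ misses all of $D$, and each vertex of $W$ misses its $n+1$ matched partners. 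Nor can any density-only theorem close this hole, since decomposability of such graphs is a structural matter rather than one of density and divisibility (a complete tripartite graph with distinct large part sizes has no $K_3$-decomposition whatsoever). The paper sidesteps this precisely by \emph{not} performing your matching step: after Lemma~\ref{L:colouringToDecomp}(i), the graph remaining is exactly $K_V-(K_A\cup K_B)$ with $A=A'\cup V(G)\cup\{x\}$ and $B=D\cup V(G)\cup\{x\}$ intersecting in $V(G)\cup\{x\}$, and \cite[Theorem 3.1]{ColOraRee}, a structured result on Steiner triple systems with intersecting subsystems, applies to exactly this graph; its five conditions follow from the same inequalities you derived. Replacing your matching-plus-dense-theorem step with that citation repairs the proof.
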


\begin{proof}
Let $d=v-u$ and note that $d \equiv 0 \mod{6}$ because $u \equiv v \mod{6}$. Let $(U,\mathcal{A})$ be a PSTS$(u)$ whose leave has the edge set $E((\overline{K_Z} \vee G) \cup K_{A',D})$ where
\begin{itemize}
    \item
$\{A',D,V(G) \cup \{x\}\}$ is a partition of $U$ for some $x \in U \setminus V(G)$;
    \item
$|D|=d$;
    \item
$Z \subseteq A'$ with $|Z|=3$.
\end{itemize}
The existence of such a partial Steiner triple  system has been proved in Lemma \ref{L:createCandidate}, noting that $v \leq 2u-2n-3$ implies that $u \geq d+2n+3$ and that $u \geq 3n+5$ and $\frac{1}{2}(3u-n-1) \leq v$ imply $d \geq n+2$. Let $L$ be the leave of $(U,\mathcal{A})$. We claim that $(U,\mathcal{A})$ is a $(u,v,G)$-background.

We will first show that $(U,\mathcal{A})$ has no embedding of order less than $v=u+d$, and has no embedding of order $v=u+d$ if $G$ is not $3$-edge colourable. Suppose $(U,\mathcal{A})$ has an embedding $(U \cup W, \mathcal{A} \cup \mathcal{A}' \cup \mathcal{A}'')$ where $W$ is disjoint from $U$, triples in $\mathcal{A}'$ are subsets of $U$ and triples in $\mathcal{A}''$ each contain at least one vertex in $W$. Let $L'$ be the leave of $(U, \mathcal{A} \cup \mathcal{A}')$. We show that $|W| \geq d$ and that $|W| \geq d+1$ if $G$ is not $3$-edge colourable.

Consider any vertex $y \in A' \setminus Z$. Because the subgraph of $L$ induced by $D$ is empty, no triple in $\mathcal{A}'$ can contain $y$ and hence $\deg_{L'}(y)=\deg_L(y) = d$.  Each of the $d$ edges incident in $L'$ with $y$  is in a triple of $\mathcal{A''}$ whose third vertex is in $W$, and no two of these vertices in $W$ may be the same. Therefore, $|W| \geq d$.

Now further assume $G$ is not $3$-edge colourable. The triples in $\mathcal{A}'$ form a $K_3$-packing of $\overline{K_Z} \vee G$, so by Lemma~\ref{L:colouringToDecomp}(ii) there exists a vertex $z \in Z$ such that $\deg_{L'}(z) \geq d+1$. Each of the at least $d+1$ edges incident in $L'$ with $z$ is in a triple of $\mathcal{A''}$ whose third vertex is in $W$, and no two of these vertices in $W$ may be the same. Hence, $|W| \geq d+1$ if $G$ is not $3$-edge colourable.

Now, we will show that if $G$ is $3$-edge colourable then $(U,\mathcal{A})$ has an embedding of order $u+d$. Assume $G$ is $3$-edge colourable and let $V$ be a vertex set with $|V|=u+d$ and $U \subseteq V$. Let $A =  U \setminus D$ and let $a=|A|= u-d$. By Lemma~\ref{L:colouringToDecomp}(i), there is a $K_3$-decomposition $\mathcal{A}^\dag$ of $\overline{K_Z} \vee G$. Then $(U, \mathcal{A} \cup \mathcal{A}^\dag)$ is a PSTS$(u)$ whose leave has edge set $E(K_{A',D})$. Equivalently, $\mathcal{A} \cup \mathcal{A}^\dag$ is a $K_3$-decomposition of $K_{A} \cup K_{D \cup V(G) \cup \{x\}}$. Let $B=D \cup V(G) \cup \{x\}$.  It suffices to show that there is a $K_3$-decomposition $\mathcal{A}^\ddag$ of $K_V-(K_{A} \cup K_{B})$ because then $(V,\mathcal{A} \cup \mathcal{A}^\dag \cup \mathcal{A}^\ddag)$ will be an embedding of $(U,\mathcal{A})$ of order $u+d$.

By \cite[Theorem $3.1$]{ColOraRee}, there exists a $K_3$-decomposition of $K_{V}-(K_{A} \cup K_{B})$ if
\begin{itemize}[itemsep=0mm]
    \item[(i)]
$|B| \geq |A|$;
    \item[(ii)]
$|V|=2|B|+|A|-2|A \cap B|$;
    \item[(iii)]
$|A|$ and $|B|$ are odd;
    \item[(iv)]
$|A| \geq 2|A \cap B|+1$; and
    \item[(v)]
$(|B|-|A \cap B|)(|A| -2|A \cap B|-1) \equiv 0 \mod{3}$.
\end{itemize}
So it suffices to show that (i) -- (v) hold. Note $|V|=u+d=a+2d$, $|A|=a$, $|B|=d+n+1$, and $|A \cap B|=n+1$. Because $\frac{1}{2}(3u-n-1) \leq v$ we have that (i) holds, noting that $|B|=v-u+n+1$ and $|A|=u-d=2u-v$. Furthermore, (ii) and (iii) obviously hold, (iv) holds because $v \leq 2u-2n-3$, and (v) holds because $d \equiv 0 \mod{6}$. So there is a $K_3$-decomposition of $K_V-(K_{A} \cup K_{B})$ and the proof is complete.
\end{proof}

We can now obtain all the $(u,v,G)$-backgrounds we require by simply adding new vertices to those we have already constructed.

\begin{proof}[\textup{\textbf{Proof of Lemma~\ref{big lemma 2}}}]
Let $u$ and $v$ be integers satisfying the hypotheses of the lemma. If, for some integer $u' \leq u$, we can find a $(u',v,G)$-background $(U',\mathcal{A})$, then the partial Steiner triple system $(U,\mathcal{A})$ obtained from $(U',\mathcal{A})$ by adding $u-u'$ new vertices will be a $(u,v,G)$-background. So it suffices to find an integer $u' \leq u$ such that $u'$, $v$ and $G$ satisfy the hypotheses of Lemma \ref{big lemma}. We choose $u'$ to be the largest integer such that $u' \leq \min(u,\frac{1}{3}(2v+n+1))$ and $u' \equiv v \mod{6}$. This implies $u'$ must be odd.

\smallskip\noindent\textbf{Case 1.} If $u \leq \frac{1}{3}(2v+n+1)$, then $u-5 \leq u' \leq u$. Thus $\frac{1}{2}(3u'-n-1) \leq v$, because $u \leq \frac{1}{3}(2v+n+1)$ by the conditions of this case and $u' \leq u$. Also, $v \leq 2u'-2n-3$ because $v \leq 2u-2n-13$ and $u \leq u'+5$. Moreover $u' \geq 3n+5$ because $u \geq 4n+43$
and $u \leq u'+5$. So $u'$, $v$ and $G$ satisfy the hypotheses of Lemma \ref{big lemma}.

\smallskip\noindent\textbf{Case 2.} If $u > \frac{1}{3}(2v+n+1)$, then $\frac{1}{3}(2v+n+1)-6 < u' \leq \frac{1}{3}(2v+n+1)$. Thus $\frac{1}{2}(3u'-n-1) \leq v$ because $u' \leq \frac{1}{3}(2v+n+1)$. Also, $u' \geq 3n+5$ because $u' > \frac{1}{3}(2v+n+1)-6$ and $v \geq 4n+43$ imply that $u' > 3n + 23$. Finally $v \leq 2u'-2n-3$ because
\[2v<3u'-n+17<4u'-4n-6\]
where the first inequality holds because $\frac{1}{3}(2v+n+1)-6 < u'$ and the second holds because we have just seen that $u' > 3n+23$. So, again, $u'$, $v$ and $G$ satisfy the hypotheses of Lemma~\ref{big lemma}.
\end{proof}

We conclude this section by posing a natural question that is not answered by Theorem~\ref{T:FembedNPComplete}. Removing the $\epsilon$ term from Theorem~\ref{T:FembedNPComplete} would necessarily entail answering this question.

\begin{question}
Let $F'$ be the admissible function defined by $F'(u)=\{2u-1\}\cap\N^\dag$. Is \textsc{$F'$-embed} $\mathsf{NP}$-complete?
\end{question}

\section{Counterexamples to Conjecture~\ref{C:bryant}}

In this section we prove Theorem~\ref{T:counterExample} by exhibiting, for each even $w \geq 4$, a Steiner triple system whose leave is a counterexample to Conjecture~\ref{C:bryant}. We introduce some more notation. The maximum degree and minimum degree of a graph $G$ are denoted by $\Delta(G)$ and $\delta(G)$ respectively. The smallest number of colours required to edge colour a graph $G$ is the \emph{chromatic index} of $G$, denoted $\chi'(G)$. Vizing's theorem \cite{Viz} states that $\chi'(G) \in \{\Delta(G),\Delta(G)+1\}$ for any graph $G$ and K{\"o}nig's theorem \cite{Kon} states that $\chi'(G) = \Delta(G)$ for any bipartite graph $G$. A \emph{matching} is a 1-regular graph. Note that the edges assigned a particular colour by an edge colouring always induce a matching. In an edge-colouring of a graph we say that a colour $c$ \emph{hits} a vertex $u$ if there is an edge of colour $c$ incident with $u$. Otherwise we say $c$ \emph{misses} $u$.

Our first lemma in this section encapsulates our strategy for finding graphs that form counterexamples to Conjecture~\ref{C:bryant}.

\begin{lemma}\label{L:counterexampleIdea}
An even graph $L$ of odd order $u$ is a counterexample to Conjecture~\ref{C:bryant} for a given even integer $w$ if it satisfies
\begin{itemize}
    \item[\textup{(i)}]
$|E(L)| = \frac{1}{2}w(u-w+1)$;
    \item[\textup{(ii)}]
$\chi'(L)=w$;
    \item[\textup{(iii)}]
there are two vertices $d_1$ and $d_2$ of $L$ such that, in any $w$-edge colouring of $L$, the set of colours that hit $d_1$ equals the set of colours that hit $d_2$.
\end{itemize}
\end{lemma}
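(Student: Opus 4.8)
The plan is to show that an even graph $L$ of odd order $u$ satisfying (i)--(iii) violates Conjecture~\ref{C:bryant} by exhibiting it as a case where conditions (1)--(4) of the conjecture all hold, yet no $K_3$-decomposition of $L \vee K_w$ exists. First I would verify that conditions (1)--(3) and (4) of the conjecture are all satisfied for this $L$ and the given $w$. Since $L$ is even and $w$ is even, $\deg_L(x) \equiv w \equiv 0 \mod{2}$ for each vertex, giving (1). Since $u$ is odd and $w$ is even, $u+w$ is odd, giving (2). For (3), using (i) we compute $|E(L)|+uw+\binom{w}{2} = \tfrac{1}{2}w(u-w+1)+uw+\tfrac{1}{2}w(w-1)$; I would check this simplifies to $\tfrac{1}{2}w(3u) = \tfrac{3}{2}uw$, which is divisible by $3$ since $w$ is even (so $\tfrac{w}{2}$ is an integer and the whole expression equals $3u \cdot \tfrac{w}{2}$). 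For condition (4), I would take $G = L$ itself: then $L - G$ is empty and trivially has a $K_3$-decomposition (4i); condition (4iii) holds because $\chi'(L) = w$ by (ii), so $L$ is $w$-edge colourable; and (4ii) requires $w^2 - (u+1)w + 2|E(L)| \geq 0$, which using (i) becomes $w^2 - (u+1)w + w(u-w+1) = w^2 - uw - w + uw - w^2 + w = 0 \geq 0$. So all four conditions hold with equality in the delicate places.

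The crux is then to prove that, despite (1)--(4) holding, $L \vee K_w$ admits no $K_3$-decomposition, so that $L$ is genuinely a counterexample. Here I would argue by contradiction using the extremal tightness forced by (i) together with the colour-coincidence property (iii). A $K_3$-decomposition of $L \vee K_w$ must in particular decompose all edges of $K_{V(L),W}$ where $W$ is the $w$-element vertex set of $K_w$. The counting in (i) with equality in (4ii) means the decomposition is extremal: essentially every triangle meeting $W$ must be forced to use exactly one vertex of $W$ together with an edge of $L$, so the triangles of the form $\{x,y,c\}$ with $xy \in E(L)$ and $c \in W$ induce a proper $w$-edge colouring of $L$ by the map $xy \mapsto c$. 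This is the same translation between decompositions and edge-colourings used in Lemma~\ref{L:colouringToDecomp}. The key obstruction I would then exploit is that the edges of $K_w$ on $W$, together with the edges of $K_{V(L),W}$ not used by $L$-triangles, must themselves be decomposed into triangles, and the bookkeeping of which colours (vertices of $W$) are \emph{unused} at each vertex of $L$ is governed by exactly the ``colour misses $v$'' structure from the edge colouring.

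The heart of the contradiction, and the step I expect to be the main obstacle, is parlaying property (iii) into an impossibility. Consider the two special vertices $d_1, d_2$. By (iii), in the induced $w$-edge colouring of $L$ the set of colours hitting $d_1$ equals the set hitting $d_2$; equivalently the set of colours \emph{missing} $d_1$ equals the set missing $d_2$. For a vertex $v$ of $L$, each colour $c \in W$ that misses $v$ corresponds to an edge $vc$ of $K_{V(L),W}$ that is not covered by any $L$-triangle, and so must be covered by a triangle using either another vertex of $L$ or an edge inside $W$. Tracking the parity and count of these residual edges at each vertex of $L$ against the vertices of $W$, one finds that the residual bipartite graph between $V(L)$ and $W$ (edges not covered by $L$-triangles) must itself be decomposed using triangles whose third vertex lies in $W$, forcing a specific matching-type structure on $W$ and on the misses. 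The coincidence of misses at $d_1$ and $d_2$ then overloads one vertex of $W$: the triangles needed to cover the residual edges at $d_1$ and at $d_2$ compete for the same colour vertex or the same $W$-edge, producing a degree or counting contradiction. I would make this precise by counting, for a fixed colour $c$ that misses both $d_1$ and $d_2$, how many residual edges at $c$ must be paired off inside the decomposition and showing the parity is wrong, or equivalently that some edge of $K_w$ would have to lie in two triangles. Closing this final counting argument cleanly is where the real work lies; conditions (i)--(iii) are precisely engineered so that the extremal slack is zero, leaving no room to repair the obstruction.
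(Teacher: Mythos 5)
Your verification that $L$ satisfies conditions (1)--(4) of Conjecture~\ref{C:bryant} is correct and is exactly what the paper does (including the choice $G=L$ and the observation that (4ii) holds with equality). The problem is in the second half: you set up the right framework but explicitly stop short of the contradiction (``closing this final counting argument cleanly is where the real work lies''), and the framework itself is left imprecise at a point where precision matters. It is not true that ``essentially every triangle meeting $W$ must use exactly one vertex of $W$ together with an edge of $L$.'' What one must prove, by the count you only allude to, is this: $L\vee K_W$ has $uw$ cross edges and, by (i), exactly $|E(L)|+\binom{w}{2}=\tfrac{1}{2}uw$ pure edges; since every triangle contains at most two cross edges and at least one pure edge, any $K_3$-decomposition $\mathcal{D}$ has exactly $\tfrac{1}{2}uw$ triangles, each containing exactly two cross edges. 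Hence $\mathcal{D}$ consists of exactly $|E(L)|$ triangles made of an $L$-edge plus one vertex of $W$ (type-2) and exactly $\binom{w}{2}$ triangles made of a $W$-edge plus one vertex of $L$ (type-1), and in particular \emph{no} triangle lies wholly inside $L$ or wholly inside $W$. This is what guarantees that every edge of $L$ lies in a type-2 triangle, so that $xy\mapsto z$ (where $\{x,y,z\}\in\mathcal{D}$, $z\in W$) is a proper $w$-edge colouring of \emph{all} of $L$; without it, hypothesis (iii) cannot even be invoked.

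The contradiction you were searching for is then short, and needs no parity argument. If colour $c$ misses $d_1$, the cross edge $\{d_1,c\}$ lies in no type-2 triangle, so it lies in a type-1 triangle $\{d_1,c,c'\}$; but then the cross edge $\{d_1,c'\}$ also lies in this triangle, so $c'$ misses $d_1$ as well. Thus the type-1 triangles through $d_1$ induce a perfect matching (in $K_W$) on the set of colours missing $d_1$, and likewise for $d_2$; by (iii) these two missing sets coincide. In the paper the common missing set has size exactly two, say $\{1,2\}$ (this is the ``without loss of generality'' step there, and it is why its contradiction is immediate): the only perfect matching on $\{1,2\}$ is the single edge $\{1,2\}$, so both $\{d_1,1,2\}$ and $\{d_2,1,2\}$ lie in $\mathcal{D}$, double-covering the pure edge $\{1,2\}$ --- contradiction. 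Your instinct that the general case is genuinely problematic is, in fact, sound: if the common missing set had size four or more, the two matchings could be edge-disjoint and no $W$-edge would be forced into two triangles; and if $\deg_L(d_i)=w$ then nothing misses $d_i$ and (iii) holds vacuously with no obstruction at all. So the argument really does require that $d_1$ and $d_2$ have degree $w-2$ (equivalently, that exactly two colours miss them); the paper assumes this implicitly, and the graphs it later constructs satisfy it. The lesson is that the contradiction must be extracted from the size-two structure of the missing sets, not from a general counting or parity argument over the residual edges.
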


\begin{proof}
Let $L$ be an even graph of odd order $u$ that satisfies (i), (ii) and (iii) for a given $w$. We first prove that $L$ satisfies the conditions in Conjecture~\ref{C:bryant}. Obviously (1) and (2) of Conjecture~\ref{C:bryant} hold because $L$ is an even graph, $w$ is even and $u$ is odd. Also, (3) of Conjecture~\ref{C:bryant} holds because $|E(L)|+uw+ \binom{w}{2} = \frac{3}{2}uw \equiv 0 \mod{3}$ since $|E(L)| = \frac{1}{2}w(u-w+1)$. Moreover $|E(L)| = \frac{1}{2}w(u-w+1)$ implies $w^2 -(u+1)w+ 2|E(L)| = 0$ and so (4) of Conjecture~\ref{C:bryant} holds with $G=L$, noting that $\chi'(L)=w$. Hence $L$ satisfies all the conditions in Conjecture~\ref{C:bryant}.

Now let $W=\{1,\ldots,w\}$ be a set disjoint from $V(L)$ and suppose for a contradiction that $L \vee K_W$ has a $K_3$-decomposition $\mathcal{D}$. Call the edges of $L \vee K_W$ with one endpoint in $V(L)$ and one endpoint in $W$ \emph{cross edges} and call the other edges \emph{pure edges}. For $i \in \{0,1,2,3\}$, call triangles in $\mathcal{D}$ that contain exactly $i$ vertices in $V(L)$ \emph{type-$i$ triangles}. Now $L \vee K_W$ has $uw$ cross edges and $|E(L)|+\binom{w}{2}=\frac{1}{2}uw$ pure edges. Thus, because each triangle in $\mathcal{D}$ contains at most two cross edges and at least one pure edge, $\mathcal{D}$ must consist of $|E(L)|$ type-2 triangles and $\binom{w}{2}$ type-1 triangles.

The $|E(L)|$ type-2 triangles in $\mathcal{D}$ induce a proper edge colouring $\gamma$ of $L$ with the colour set $W$ defined by $\gamma(xy)=z$ for each $xy \in E(L)$, where $z$ is the unique element of $W$ such that $\{x,y,z\}$ is in $\mathcal{D}$. By (iii), in $\gamma$, the set of colours that hit $d_1$ equals the set of colours that hit $d_2$. Without loss of generality assume the set of colours that hit $d_1$ and $d_2$ is $\{3,4, \ldots w\}$ and so colours $1$ and $2$ miss $d_1$ and $d_2$. Thus the only edges incident with $d_1$ and $d_2$ that do not occur in type-2 triangles in $\mathcal{D}$ are $\{d_1,1\}$, $\{d_1,2\}$, $\{d_2,1\}$, $\{d_2,2\}$. So these must occur in type-1 triangles in $\mathcal{D}$. However, this implies the contradiction that both the triangles $\{1,2,d_1\}$ and $\{1,2,d_2\}$ occur in $\mathcal{D}$. Therefore $L$ is indeed a counterexample to Conjecture~\ref{C:bryant} for the given value of $w$.
\end{proof}

We first exhibit a PSTS($15$) whose leave forms a counterexample to Conjecture~\ref{C:bryant} for $w=4$.

\begin{example} \label{E:darryn's example}
Let $U=\{1,2,\dots,15\}$ and let $\mathcal{A}$ be the set consisting
of the following $27$ triples.
$$\begin{array}{llllll}
\{1,2,7\}&
\{1,3,12\}&
\{1,4,11\}&
\{1,8,15\}&
\{1,9,10\}&
\{1,13,14\}\\[2pt]
\{2,5,10\}&
\{2,6,13\}&
\{2,8,11\}&
\{2,9,14\}&
\{2,12,15\}&
\{3,7,8\}\\[2pt]
\{3,9,15\}&
\{3,10,14\}&
\{3,11,13\}&
\{4,7,15\}&
\{4,8,14\}&
\{4,9,13\}\\[2pt]
\{4,10,12\}&
\{5,7,13\}&
\{5,8,12\}&
\{5,9,11\}&
\{5,14,15\}&
\{6,7,10\}\\[2pt]
\{6,8,9\}&
\{6,11,15\}&
\{6,12,14\}&&&
\end{array}$$
Then $(U,A)$ is a PSTS($15$) and the leave $L$ of $(U,A)$ has two
components as shown in Figure~\ref{F:darrynLeave}.

We note that $|E(L)|=24$ and that $\chi'(L)=4$ because $\Delta(L)=4$ and a $4$-edge colouring of $L$ is given by the different line styles in Figure~\ref{F:darrynLeave}.
Further, in any $4$-edge colouring of $L$, it is
not difficult to see that the set of colours that hit vertex $1$ equals the set of colours that hit vertex $2$ (for a formal proof of this see Lemma~\ref{L:L1 colouring}). Thus $L$ satisfies the conditions of Lemma~\ref{L:counterexampleIdea} for $w=4$ and so is a counterexample to Conjecture~\ref{C:bryant} for $w=4$.

\begin{figure}[H]
\centering
\includegraphics[width=0.7\linewidth]{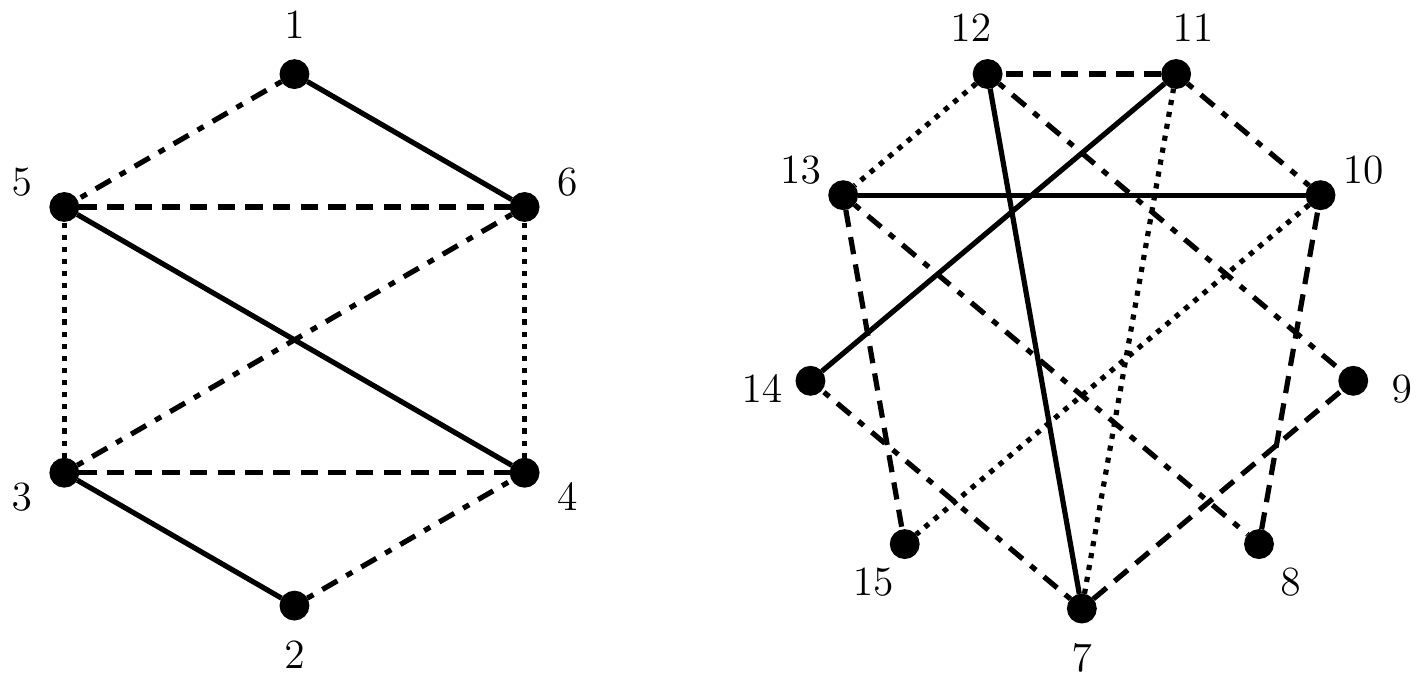}
\caption{Leave of the PSTS$(15)$ given in Example~\ref{E:darryn's example}}\label{F:darrynLeave}
\end{figure}
\end{example}

We now  generalise this small example to find counterexamples to  Conjecture~\ref{C:bryant} (of much larger unspecified order) for all even $w \geq 6$. Our next lemma details how we  generalise the component with six vertices in the leave $L$ in Example~\ref{E:darryn's example}. For an integer $n \geq 2$, let $\mathbb{Z}_n$ denote the additive group of integers modulo $n$.

\begin{lemma} \label{L:L1 colouring}
Let $w\geq 4$ be an even integer and let $L_1$ be the complement of the graph with vertex set $\mathbb{Z}_{w+1} \cup \{\infty\}$ shown in Figure~$\ref{F:L3}$.

Then
\begin{itemize}
    \item[\textup{(i)}]
$\chi'(L_1)=w$; and
    \item[\textup{(ii)}]
in any $w$-edge colouring of $L_1$, the set of colours that hits vertex $1$ equals the set of colours that hits vertex $2$.
\end{itemize}
\end{lemma}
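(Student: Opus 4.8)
The plan is to understand the graph $L_1$ concretely and then reduce claim (ii) to an argument about which colours can miss the two special vertices, mirroring the role that vertices $1$ and $2$ played in Example~\ref{E:darryn's example}. First I would read off the structure of $L_1$ from Figure~\ref{F:L3}: since $L_1$ is the \emph{complement} (within the vertex set $\mathbb{Z}_{w+1}\cup\{\infty\}$) of the displayed graph, I would compute the degree sequence of $L_1$. The ambient complete graph on $w+2$ vertices is $(w+1)$-regular, so $\deg_{L_1}(x) = (w+1) - \deg_{\text{fig}}(x)$ for each $x$. I expect $L_1$ to be an even graph with $\Delta(L_1)=w$, with the vertex $\infty$ and perhaps one other vertex having the maximum degree $w$, and with vertices $1$ and $2$ being the two distinguished vertices of part~(ii). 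Establishing (i) should then be routine: the lower bound $\chi'(L_1)\geq \Delta(L_1)=w$ is immediate from Vizing, and for the upper bound I would either exhibit an explicit $w$-edge colouring (most likely using the cyclic $\mathbb{Z}_{w+1}$-structure to define colour classes as rotations of a base matching, with $\infty$ handled separately) or invoke Vizing's theorem and rule out class~2 by a parity/counting argument, e.g.\ by checking that $L_1$ is not overfull.

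The heart of the lemma is part~(ii), and I would attack it exactly as in the proof of Lemma~\ref{L:counterexampleIdea}: in a $w$-edge colouring there are $w$ available colours, and a vertex of degree $w$ is hit by all $w$ colours while a vertex of degree $w-2$ misses exactly two. I would first identify the degrees of vertices $1$ and $2$ in $L_1$; to have a nontrivial ``missing set,'' I expect both to have degree $w-2$, so that exactly two colours miss each of them. The goal is to show these two missing pairs coincide. The natural mechanism is that $1$ and $2$ should be forced, by the local structure of $L_1$, to have the \emph{same} set of two missing colours. I would look for a small gadget — a configuration of vertices all of degree $w$ (hence hit by every colour) adjacent to both $1$ and $2$ in a way that pins down the missing colours. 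Concretely, if $1$ and $2$ are the two non-neighbours of some full-degree vertices, or if the non-edges at $1$ and $2$ go to a common set of ``saturated'' vertices, then the two colours absent at a full vertex $v$ along the missing edges must be exactly the colours missing at $1$ and at $2$.

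The key reduction I would use is the standard fact about Kempe-type / saturation arguments on near-regular graphs: if $v$ is a vertex of degree $w$ (so every colour hits $v$), then for any colour $c$ the unique $c$-edge at $v$ goes to some neighbour, and by tracing which neighbours of the full vertices are \emph{not} adjacent to $1$ and $2$ one can read off the missing colours. I would set up notation for the two non-neighbours of $1$ and of $2$ and show, using that all the relevant intermediate vertices have full degree $w$ and are therefore saturated by all colours, that the colour missing at $1$ is the same colour missing at $2$. This is the analogue of the concrete ``colours $1$ and $2$ miss both $d_1$ and $d_2$'' deduction in Lemma~\ref{L:counterexampleIdea}, now carried out uniformly in $w$ via the cyclic description.

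The main obstacle I anticipate is part~(ii): translating the figure into an argument that works for all even $w\geq 4$ simultaneously, rather than case-by-case. The difficulty is purely combinatorial bookkeeping — keeping track of which vertices are full-degree and hence saturated, and showing the forced propagation of missing colours from $1$ to $2$ is genuinely rigid and not merely possible. Everything else (the even-graph and $\Delta=w$ computations, and $\chi'(L_1)=w$) I expect to be straightforward once the complement is written out explicitly. I would therefore invest the bulk of the writeup in pinning down the adjacency structure of $L_1$ near vertices $1$ and $2$ and proving that the two missing-colour pairs are forced to be equal.
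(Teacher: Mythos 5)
Your setup is on track: you correctly infer from the complement structure that vertices $1$ and $2$ have degree $w-2$ while every other vertex of $L_1$ has degree $w$, that exactly two colours miss each of $1$ and $2$ in any $w$-edge colouring, and that part (i) should be done by exhibiting an explicit colouring built from the cyclic structure (this is exactly what the paper does, via $\gamma(xy)=x+y$ on $\mathbb{Z}_{w+1}$ with the edges at $\infty$ handled separately). One caution on (i): your fallback of ``Vizing plus checking $L_1$ is not overfull'' is not a valid alternative, since the absence of an overfull subgraph implying class~1 is the (open) overfull conjecture, not a theorem; the explicit colouring is the route that actually works.

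The genuine gap is in part (ii), which you yourself flag as the main obstacle but never resolve. The mechanism you propose --- a local gadget of saturated full-degree vertices adjacent to both $1$ and $2$ that ``pins down'' the missing colours, traced via Kempe/saturation arguments --- cannot work as described. The non-neighbours of vertex $1$ in $L_1$ are $\{2,w,\infty\}$ and those of vertex $2$ are $\{1,0,w-1\}$; these sets are disjoint apart from $1$ and $2$ themselves, so there is no common saturated configuration, and moreover no \emph{specific} colours are forced to miss $1$ or $2$ (permuting colour names shows any pair can be the missing pair), so nothing can be ``read off'' locally. The correct argument is a one-line global parity observation, which is what the paper uses: each colour class is a matching, hence covers an even number of vertices of $L_1$, and since $|V(L_1)|=w+2$ is even, each colour class misses an \emph{even} number of vertices. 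Every vertex other than $1$ and $2$ has degree $w$ and is therefore hit by all $w$ colours, so the only vertices a colour can miss are $1$ and $2$; by parity, any colour that misses one of them must miss both. Hence the two colours missing $1$ coincide with the two colours missing $2$, which is exactly (ii). Without this parity step (or something equivalent to it), your proposal does not constitute a proof.
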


\begin{proof}
A proper $w$-edge colouring $\gamma$ of $L_1$ with colour set $\mathbb{Z}_{w+1}\setminus\{0\}$ is given by
\begin{itemize}
\item $\gamma(xy)=x+y$ for each $xy \in E(L_1)$ with $x,y\in\mathbb{Z}_{w+1}$;
\item $\gamma(x\infty)=2x$ for each $x\in\{2,3,\ldots,w\}$;
\item $\gamma(0\infty)=2$.
\end{itemize}
Thus, since $\Delta(L_1)=w$, we have $\chi'(L_1)=w$ and (i) holds.

To prove (ii), consider an arbitrary $w$-edge colouring of $L_1$.
Since vertices $1$ and $2$ have degree $w-2$ and every other vertex has degree $w$,
there are exactly two colours that miss vertex $1$, exactly two colours that miss vertex $2$,
and each other vertex is hit by every colour. Since any colour that misses a vertex misses at least two vertices, it follows immediately that the two colours that miss vertex $1$ are the same as the two colours that miss vertex $2$.
So (ii) holds.
\end{proof}

\begin{figure}[H]
\centering
\includegraphics[width=0.4\linewidth]{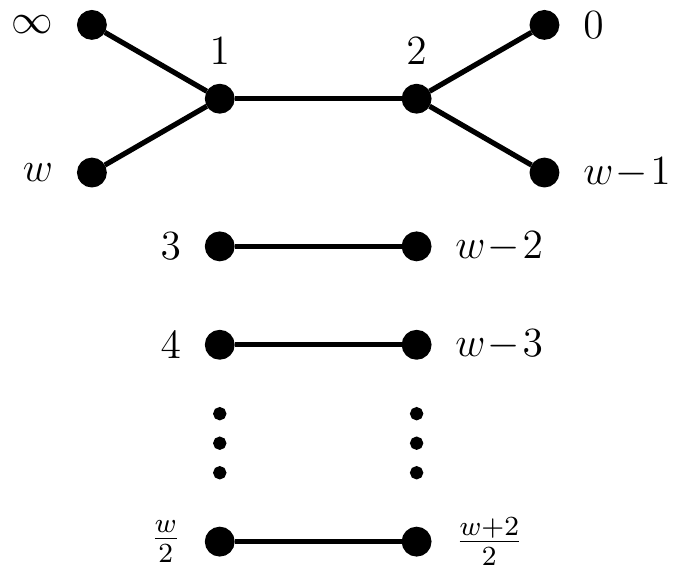}
\caption{The complement of the graph $L_1$ in Lemma~\ref{L:L1 colouring}}
\label{F:L3}
\end{figure}

We also require the following simple consequence of a theorem obtained by Dross \cite{Dross2016} using a result of Barber et al. \cite{BarKuhLoOst}.

\begin{lemma}\label{L:DrossConsequence}
Let $w$ be an even positive integer. There exists an integer $u_0$ such that for any even graph $L$ with odd order $u \geq u_0$, $|E(L)|\equiv \binom{u}{2} \mod{3}$ and $\Delta(L) \leq w$, there is a partial Steiner triple system whose leave is $L$.
\end{lemma}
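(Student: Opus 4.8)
The plan is to deduce Lemma~\ref{L:DrossConsequence} from an existing ``completion'' result that guarantees a $K_3$-decomposition of a graph $H$ once $H$ satisfies obvious divisibility conditions together with a high minimum-degree hypothesis. The key observation is that finding a partial Steiner triple system with leave $L$ is exactly the same as finding a $K_3$-decomposition of the complement $\overline{L}=K_U-L$ (viewed on the vertex set $U$ of $L$), since the triples of the system decompose precisely the edges \emph{not} in the leave. So I would set $H=K_U-L$ and verify that $H$ meets the hypotheses of the cited Dross/Barber--K\"uhn--Lo--Osthus-type theorem.

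First I would translate the three hypotheses on $L$ into statements about $H=K_U-L$. Since $|V(H)|=u$ is odd, every vertex of $K_U$ has even degree $u-1$; because $L$ is even, subtracting it keeps $H$ even, so $H$ is an even graph on an odd number of vertices (this is the parity/divisibility condition that each vertex degree is even, as required for a $K_3$-decomposition). Next, the edge-count condition $|E(L)|\equiv\binom{u}{2}\pmod 3$ gives $|E(H)|=\binom{u}{2}-|E(L)|\equiv 0\pmod 3$, which is the second divisibility condition. Finally, the bound $\Delta(L)\le w$ forces $\delta(H)\ge (u-1)-w$, so that as $u\to\infty$ the minimum degree of $H$ is at least $u-1-w$, which exceeds any fixed proportional threshold of the form $(1-c)u$ once $u$ is large enough (here $w$ is a fixed constant). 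The cited result of Dross, building on Barber et al., states that any graph on $u$ vertices satisfying the two divisibility conditions and having minimum degree at least some $\rho u$ (with $\rho<1$ a fixed constant, in fact one can take $\rho$ well below $1$) admits a $K_3$-decomposition; since $\delta(H)\ge u-1-w$ beats $\rho u$ for all sufficiently large $u$, I would simply choose $u_0$ large enough that this inequality holds.

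The main structural step, then, is to define $u_0$ as the smallest integer for which $u-1-w\ge \rho u$ whenever $u\ge u_0$ (equivalently $u\ge (w+1)/(1-\rho)$), and then invoke the completion theorem on $H=K_U-L$ to obtain a $K_3$-decomposition $\mathcal{B}$ of $H$; the pair $(U,\mathcal{B})$ is the desired partial Steiner triple system, and its leave is exactly $L$ by construction. I would be careful to state the three verified conditions in the order the cited theorem expects them and to note that $u_0$ depends only on $w$ (through the constant $\rho$), which is all that the lemma claims.

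The step I expect to be the main obstacle is purely bookkeeping rather than mathematical: I must pin down the precise hypotheses of the version of Dross's theorem being invoked---in particular the exact form and value of the minimum-degree threshold and whether it is stated for decompositions of general graphs or only of graphs close to complete---and confirm that a high-girth or edge-density side condition is not secretly required. Once the correct statement is quoted, the verification of the divisibility and degree conditions is routine, since the high minimum degree $u-1-w$ of $H=K_U-L$ comfortably dominates any fixed-fraction threshold for large $u$. No delicate estimate is needed beyond choosing $u_0$ large in terms of $w$.
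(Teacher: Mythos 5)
Your proposal is correct and follows essentially the same route as the paper: pass to the complement $\overline{L}=K_U-L$, check that it is even, has $|E(\overline{L})|\equiv 0 \pmod{3}$, and has minimum degree at least $u-1-w$, then apply Dross's theorem (the paper uses the explicit threshold $\delta \geq \tfrac{91}{100}u$ and takes $u_0=\max(n_0,\lceil\tfrac{100}{9}(w+1)\rceil)$, which matches your choice of $u_0$ large enough in terms of $w$ and the theorem's own threshold).
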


\begin{proof}
Theorem~7 of \cite{Dross2016} implies that, there exists an integer $n_0$ such that any even graph $G$ with $n \geq n_0$ vertices with $|E(G)| \equiv 0 \mod{3}$ and $\delta(G) \geq \frac{91}{100}n$ is $K_3$-decomposable.
Take $u_0 = \max(n_0,\lceil\frac{100}{9}(w+1)\rceil)$ and suppose that $L$ is an even graph with odd order $u \geq u_0$, $|E(L)|\equiv \binom{u}{2} \mod{3}$ and $\Delta(L) \leq w$. Let $\overline{L}$ be the complement of $L$. It suffices to show that there is a $K_3$-decomposition of $\overline{L}$.

Now, $\overline{L}$ is an even graph because $L$ is an even graph of odd order, and $u \geq u_0 \geq n_0$. Furthermore $\delta(\overline{L}) \geq u-w-1 \geq \frac{91}{100}u$ because $\Delta(L) \leq w$ and $u \geq u_0 \geq \frac{100}{9}(w+1)$. Finally, $|E(\overline{L})| \equiv \binom{u}{2} - |E(L)| \equiv 0 \mod{3}$. Thus we can apply \cite[Theorem~7]{Dross2016} to obtain a \hbox{$K_3$-decomposition} of $\overline{L}$.
\end{proof}

\begin{proof} [\textup{\textbf{Proof of Theorem~\ref{T:counterExample}}}]
A partial Steiner triple system whose leave is a counterexample to Conjecture~\ref{C:bryant} for $w=4$ was exhibited in Example~\ref{E:darryn's example}. Let $w \geq 6$ be an even integer. We will show that there exists a partial Steiner triple system whose leave is a counterexample to Conjecture~\ref{C:bryant} for this value of $w$.

By Lemma~\ref{L:DrossConsequence} there exists an integer $u_0$ such that, for any even graph $L$ with odd order $u \geq u_0$, $|E(L)|\equiv \binom{u}{2} \mod{3}$ and $\Delta(L) \leq w$, there is a partial Steiner triple system whose leave is $L$. Fix an odd $u \geq \max(u_0,4w+1)$ such that $u+w \equiv 1,3 \mod{6}$.
We will find an even graph $L$ of order $u$ such that $|E(L)| \equiv \binom{u}{2} \mod{3}$ and $L$ satisfies conditions (i), (ii) and (iii) of Lemma~\ref{L:counterexampleIdea}. This will suffice to complete the proof because $L$ will have maximum degree at most $w$ by (ii), and so by Lemma~\ref{L:DrossConsequence} there will be a partial Steiner triple system $(U,\mathcal{A})$  with leave $L$.  We will take $L$ to be a vertex-disjoint union of three graphs, $L_1$, $L_2$ and $L_3$, that we now define.

First we let $L_1$ be the graph of order $w+2$ given by Lemma~\ref{L:L1 colouring}. Note that $|E(L_1)| = \binom{w+2}{2} - \frac{w+6}{2} = \frac{1}{2}w^2+w-2$.
Next let  $t=\frac{1}{2}(u-2w-1)$, note that $t \geq w$ because $u \geq 4w+1$, and let $L_{2}$ be the bipartite graph with parts $\{a_0,\ldots,a_{t-1}\}$ and $\{b_0,\ldots,b_{t-1}\}$ and edge set
\[\big\{a_ib_j:i \in \{0,\ldots,t-1\}, j \in \{i,\ldots,i+w-1\}\big\} \setminus \{a_0b_1,a_0b_2,a_1b_1,a_1b_2\}\]
where the subscripts are considered modulo $t$. So $L_2$ is a graph obtained from a $w$-regular bipartite graph of order $2t$ by removing the edges of a $4$-cycle. Hence $|E(L_2)| = \frac{w}{2}(u-2w-1)-4$. Let $L_3$ be the graph with vertex set $\{c_1,\ldots,c_{w-1}\}$ and edge set $\{c_1c_2,c_1c_5,c_2c_5,c_3c_4,c_3c_5,c_4c_5\}$ (note $w \geq 6$). So $L_3$ is the union of the bowtie graph and $w-6$ isolated vertices, and hence $|E(L_3)| =6$.

It only remains to show that $L$ has the properties we desired. Clearly $L$ is an even graph of order $u$.
Now $|E(L)| = \frac{1}{2}w(u-w+1)$ because $|E(L_1)| = \frac{1}{2}w^2+w-2$, $|E(L_2)| = \frac{1}{2}w(u-2w-1)-4$ and $|E(L_3)| = 6$. So $L$ satisfies (i) of Lemma~\ref{L:counterexampleIdea}. Furthermore $|E(L)|\equiv \binom{u}{2} \mod{3}$ because $\binom{u}{2}-|E(L)| = \frac{1}{2}((u+w)(u+w-1)-3uw)$ and $u+w \equiv 1,3 \mod{6}$.
Also, $L$ satisfies (ii) of Lemma~\ref{L:counterexampleIdea} because $\chi'(L_1)=w$ by Lemma~\ref{L:L1 colouring}(i), $\chi'(L_2)=w$ since $L_2$ is bipartite and $\Delta(L_2)=w$, and $\chi'(L_3) \leq w$ since $\Delta(L_3)=4$. Finally, $L$ satisfies (iii) of Lemma~\ref{L:counterexampleIdea} by Lemma~\ref{L:L1 colouring}(ii).
\end{proof}

Theorem~\ref{T:counterExample} shows that the conditions of Conjecture~\ref{C:bryant} do not suffice for the existence of a $K_3$-decomposition of $L \vee K_w$. It remains possible, however, that a slightly strengthened set of conditions does suffice.

\begin{question}
Let $L$ be a graph with $u$ vertices and let $w$ be a nonnegative integer. Do the conditions of Conjecture~\ref{C:bryant} with (4)(iii) replaced by $\Delta(G) \leq w-1$ guarantee the existence of a $K_3$-decomposition of $L \vee K_w$?
\end{question}

Of course, these new conditions are not necessary for the existence of a $K_3$-decomposition of $L \vee K_w$.

\bigskip
\noindent\textbf{Acknowledgments.}\quad
This work was supported by Australian Research Council grants\linebreak DP150100506, DP150100530 and FT160100048.

\end{document}